\providecommand{\U}[1]{\protect\rule{.1in}{.1in}}
\newtheorem{theorem}{Theorem}[section]
\newtheorem{corollary}[theorem]{Corollary}
\newtheorem{example}[theorem]{Example}
\newtheorem{lemma}[theorem]{Lemma}
\newtheorem{remark}[theorem]{Remark}
\newenvironment{proof}[1][Proof]{\noindent\textbf{#1.} }{\  $\Box$}
\numberwithin{equation}{section}
\begin{document}

\title{Direct Method on Stochastic Maximum Principle for Optimization with Recursive Utilities}
\author{Mingshang Hu \thanks{Qilu Institute of Finance, Shandong University, Jinan,
Shandong 250100, PR China. humingshang@sdu.edu.cn. Research supported by NSF
(No. 11201262 and 10921101) and Shandong Province (No.BS2013SF020 and
ZR2014AP005) and the 111 Project (No. B12023) }}
\date{}
\maketitle

\textbf{Abstract}. We obtain the variational equations for backward stochastic
differential equations in recursive stochastic optimal control problems, and
then get the maximum principle which is novel. The control domain need not be
convex, and the generator of the backward stochastic differential equation can
contain $z$.

{\textbf{Key words}. }Backward stochastic differential equations, Recursive
stochastic optimal control, Maximum principle, Variational equation

\textbf{AMS subject classifications.} 93E20, 60H10, 49K45

\addcontentsline{toc}{section}{\hspace*{1.8em}Abstract}

\section{Introduction}

Let $(\Omega,\mathcal{F},P)$ be a probability space and let $W$ be a
$d$-dimensional Brownian motion. The filtration $\{\mathcal{F}_{t}:t\geq0\}$
is generated by $W$, i.e.,%
\[
\mathcal{F}_{t}:=\sigma\{W(s):s\leq t\}\vee\mathcal{N},
\]
where $\mathcal{N}$ is all $P$-null sets. Let $U$ be a set in $\mathbb{R}^{k}$
and $T>0$ be a given terminal time. Set%
\[
\mathcal{U}[0,T]:=\{(u(s))_{s\in\lbrack0,T]}:u\text{ is progressively
measurable, }u(s)\in U\text{ and }E[\int_{0}^{T}|u(s)|^{\beta}ds]<\infty\text{
for all }\beta>0\},
\]
where $U$ is called the control domain and $\mathcal{U}[0,T]$ is called the
set of all admissible controls. In fact, we just need $E[\int_{0}%
^{T}|u(s)|^{\beta_{0}}ds]<\infty$ for some $\beta_{0}>0$. For simplicity, we
do not explicitly give this $\beta_{0}$ in this paper. Consider the following
state equation:%
\begin{equation}
\left\{
\begin{array}
[c]{l}%
dx(t)=b(t,x(t),u(t))dt+\sigma(t,x(t),u(t))dW(t),\\
x(0)=x_{0}\in\mathbb{R}^{n},
\end{array}
\right.  \label{state1}%
\end{equation}
where $b:[0,T]\times\mathbb{R}^{n}\times\mathbb{R}^{k}\rightarrow
\mathbb{R}^{n}$, $\sigma:[0,T]\times\mathbb{R}^{n}\times\mathbb{R}%
^{k}\rightarrow\mathbb{R}^{n\times d}$. The cost functional is defined by%
\begin{equation}
J(u(\cdot))=E[\phi(x(T)+\int_{0}^{T}f(t,x(t),u(t))dt], \label{cost1}%
\end{equation}
where $\phi:\mathbb{R}^{n}\rightarrow\mathbb{R}$, $g:[0,T]\times\mathbb{R}%
^{n}\times\mathbb{R}^{k}\rightarrow\mathbb{R}$. The classical stochastic
optimal control problem is to minimize $J(u(\cdot))$ over $\mathcal{U}[0,T]$.
If there exists a $\bar{u}\in\mathcal{U}[0,T]$ such that%
\[
J(\bar{u}(\cdot))=\inf_{u\in\mathcal{U}[0,T]}J(u(\cdot)),
\]
$\bar{u}$ is called an optimal control. $\bar{x}(\cdot)$, which is the
solution of state equation (\ref{state1}) corresponding to $\bar{u}$, is
called an optimal trajectory. The maximum principle is to find the necessary
condition for the optimal control $\bar{u}$.

The method for deriving the maximum principle is the variational principle.
When $U$ is not convex, we use the spike variation method. More precisely, let
$\varepsilon>0$ and $E_{\varepsilon}\subset\lbrack0,T]$ with $|E_{\varepsilon
}|=\varepsilon$, define%
\[
u^{\varepsilon}(t)=\bar{u}(t)I_{E_{\varepsilon}^{c}}(t)+uI_{E_{\varepsilon}%
}(t),
\]
where $u\in U$. This $u^{\varepsilon}$ is called a spike variation of the
optimal control $\bar{u}$. For deriving the maximum principle, we only need to
use $E_{\varepsilon}=[s,s+\varepsilon]$ for $s\in\lbrack0,T-\varepsilon]$ and
$\varepsilon>0$. The difficulty of the classical stochastic optimal control
problem is the variational equation for $x(\cdot)$, which is completely
different from the deterministic optimal control problem. Peng \cite{P90}
first considered the second-order term in the Taylor expansion of the
variation and obtained the maximum principle for the classical stochastic
optimal control problem.

Consider the following backward stochastic differential equation (BSDE for
short):%
\begin{equation}
y(t)=\phi(x(T))+\int_{t}^{T}f(s,x(s),y(s),z(s),u(s))ds-\int_{t}^{T}%
z(s)dW(s),\label{state2}%
\end{equation}
where $\phi:\mathbb{R}^{n}\rightarrow\mathbb{R}$, $f:[0,T]\times\mathbb{R}%
^{n}\times\mathbb{R}\times\mathbb{R}^{d}\times\mathbb{R}^{k}\rightarrow
\mathbb{R}$. Pardoux and Peng \cite{PP90} first obtained that the BSDE
(\ref{state2}) has a unique solution $(y(\cdot),z(\cdot))$. Duffie and Epstein
\cite{DE} introduced the notion of recursive utilities in continuous time,
which is a kind of BSDE with $f$ independent of $z$. In \cite{EPQ, EPQ1}, the
authors extended the recursive utility to the case where $f$ contains $z$. The
term $z$ can be interpreted as an ambiguity aversion term in the market (see
\cite{CE}).

When $f$ is independent of $(y,z)$, it is easy to check that $y(0)=E[\phi
(x(T))+\int_{0}^{T}f(t,x(t),u(t))dt]$. So it is natural to extend the
classical stochastic optimal control problem to the recursive case. Consider
the control system which contains equations (\ref{state1}) and (\ref{state2}).
Define the cost functional%
\begin{equation}
J(u(\cdot))=y(0).\label{cost2}%
\end{equation}
The recursive stochastic optimal control problem is to minimize $J(u(\cdot))$
in (\ref{cost2}) over $\mathcal{U}[0,T]$. When the control domain $U$ is
convex, the local maximum principle for this problem can be found in
\cite{P93, DZ, JZ, SW, W1, X} and the references therein. In this paper, the
control domain $U$ is not necessarily convex, we must obtain the global
maximum principle by the spike variation method.

One direct method for treating this problem is to consider the second-order
terms in the Taylor expansion of the variation for the BSDE (\ref{state2}) as
in \cite{P90}. When $f$ depends nonlinearly on $z$, there are two major
difficulties (see \cite{Y1}): (i) What is the second-order variational
equation for the BSDE (\ref{state2}), which is not the one similar in
\cite{P90}. (ii) How to get the second-order adjoint equation which seems to
be unexpectedly complicated due to the quadratic form with respect to the
variation of $z$.

Based on these difficulties, Peng \cite{P98} proposed the following open
problem in page 269:

\textquotedblleft The corresponding `global maximum principle' for the case
where $f$ depends nonlinearly on $z$ is open, except for some special case."

Recently, a new method for treating this problem is to see $z(\cdot)$ as a
control and the terminal condition $y(T)=\phi(x(T))$ as a constraint, then use
the Ekeland variational principle to obtain the maximum principle. This idea
was used in \cite{KZ, LZ} for studying the backward linear-quadratic optimal
control problem, and then was used in \cite{W, Y1} for studying the recursive
stochastic optimal control problem. But the maximum principle contains unknown parameters.

In this paper, we overcome the two major difficulties in the above direct
method. The second-order variational equation for the BSDE (\ref{state2}) and
the maximum principle have been obtained. The main difference of the
variational equations with the ones in \cite{P90} lies in the term $\langle
p(t),\delta\sigma(t)\rangle I_{E_{\varepsilon}}(t)$ (see equation
(\ref{adjoint1}) in Section 3 for the definition of $p(t)$) in the variation
of $z$, which is $O(\varepsilon)$ for any order expansion of $f$. So it is not
helpful to use the second-order Taylor expansion for treating this term.
Moreover, we also obtain the structure of the variation for $(y,z)$ and the
variation for $x$. Based on this, we can get the second-order adjoint
equation. Due to the term $\langle p(t),\delta\sigma(t)\rangle
I_{E_{\varepsilon}}(t)$ in the variation of $z$, our global maximum principle
is novel and different from the one in \cite{W, Y1}, which completely solves
Peng's open problem. Furthermore, our maximum principle is stronger than the
one in \cite{W, Y1}  (see Example \ref{newexa1}).

This paper is organized as follows. In Section 2, we give some basic results
and the idea for the variation of BSDE. The variational equations for BSDE and
the maximum principle have been obtained in Section 3. In Section 4, we obtain
the maximum principle for the control system with state constraint.

\section{Preliminaries and idea for variation of BSDE}

The results of this part can be found in \cite{P90, YZ}. For the simplicity of
presentation, we suppose $d=1$. We need the following assumption:

\begin{description}
\item[(A1)] $b$, $\sigma$ are twice continuously differentiable with respect
to $x$; $b$, $b_{x}$, $b_{xx}$, $\sigma$, $\sigma_{x}$, $\sigma_{xx}$ are
continous in $(x,u)$; $b_{x}$, $b_{xx}$, $\sigma_{x}$, $\sigma_{xx}$ are
bounded; $b$, $\sigma$ are bounded by $C(1+|x|+|u|)$.
\end{description}

Let $\bar{u}(\cdot)$ be the optimal control for the cost function defined in
(\ref{cost1}) and let $\bar{x}(\cdot)$ be the corresponding solution of
equation (\ref{state1}). Similarly, we define $(x^{\varepsilon}(\cdot
),u^{\varepsilon}(\cdot))$. Set
\begin{equation}%
\begin{array}
[c]{ll}%
b(\cdot)=(b^{1}(\cdot),\ldots,b^{n}(\cdot))^{T}, & \sigma(\cdot)=(\sigma
^{1}(\cdot),\ldots,\sigma^{n}(\cdot))^{T},\\
b(t)=b(t,\bar{x}(t),\bar{u}(t)), & \delta b(t)=b(t,\bar{x}(t),u)-b(t),
\end{array}
\label{nota1}%
\end{equation}
similar for $b_{x}(t)$, $b_{xx}^{i}(t)$, $\delta b_{x}(t)$, $\delta b_{xx}%
^{i}(t)$, $\sigma(t)$, $\sigma_{x}(t)$, $\sigma_{xx}^{i}(t)$, $\delta
\sigma(t)$, $\delta\sigma_{x}(t)$ and $\delta\sigma_{xx}^{i}(t)$, $i\leq n$,
where $b_{x}=(b_{x_{j}}^{i})_{i,j}$. Let $x_{i}(\cdot)$, $i=1$, $2$, be the
solution of the following stochastic differential equations (SDEs for short):%

\begin{equation}
\left\{
\begin{array}
[c]{l}%
dx_{1}(t)=b_{x}(t)x_{1}(t)dt+\{\sigma_{x}(t)x_{1}(t)+\delta\sigma
(t)I_{E_{\varepsilon}}(t)\}dW(t),\\
x_{1}(0)=0,
\end{array}
\right.  \label{vari1}%
\end{equation}

\bigskip%
\begin{equation}
\left\{
\begin{array}
[c]{rl}%
dx_{2}(t)= & \{b_{x}(t)x_{2}(t)+\delta b(t)I_{E_{\varepsilon}}(t)+\frac{1}%
{2}b_{xx}(t)x_{1}(t)x_{1}(t)\}dt\\
& +\{\sigma_{x}(t)x_{2}(t)+\delta\sigma_{x}(t)x_{1}(t)I_{E_{\varepsilon}%
}(t)+\frac{1}{2}\sigma_{xx}(t)x_{1}(t)x_{1}(t)\}dW(t),\\
x_{2}(0)= & 0,
\end{array}
\right.  \label{var2}%
\end{equation}
where $b_{xx}(t)x_{1}(t)x_{1}(t)=(\mathrm{tr}[b_{xx}^{1}(t)x_{1}%
(t)x_{1}(t)^{T}],\ldots,\mathrm{tr}[b_{xx}^{n}(t)x_{1}(t)x_{1}(t)^{T}])^{T}$
and similar for $\sigma_{xx}(t)x_{1}(t)x_{1}(t)$.

\begin{theorem}
\label{th1} Suppose (A1) holds. Then, for any $\beta\geq1$,%
\begin{equation}
E[\sup_{t\in\lbrack0,T]}|x^{\varepsilon}(t)-\bar{x}(t)|^{2\beta}%
]=O(\varepsilon^{\beta}), \label{est1}%
\end{equation}%
\begin{equation}
E[\sup_{t\in\lbrack0,T]}|x_{1}(t)|^{2\beta}]=O(\varepsilon^{\beta}),
\label{est2}%
\end{equation}%
\begin{equation}
E[\sup_{t\in\lbrack0,T]}|x_{2}(t)|^{2\beta}]=O(\varepsilon^{2\beta}),
\label{est3}%
\end{equation}%
\begin{equation}
E[\sup_{t\in\lbrack0,T]}|x^{\varepsilon}(t)-\bar{x}(t)-x_{1}(t)|^{2\beta
}]=O(\varepsilon^{2\beta}), \label{est4}%
\end{equation}%
\begin{equation}
E[\sup_{t\in\lbrack0,T]}|x^{\varepsilon}(t)-\bar{x}(t)-x_{1}(t)-x_{2}%
(t)|^{2\beta}]=o(\varepsilon^{2\beta}). \label{est5}%
\end{equation}
Moreover, we have the following expansion:%
\begin{equation}%
\begin{array}
[c]{rl}%
E[\phi(x^{\varepsilon}(T))]-E[\phi(\bar{x}(T))]= & E[\langle\phi_{x}(\bar
{x}(T)),x_{1}(T)+x_{2}(T)\rangle]\\
& +E[\frac{1}{2}\langle\phi_{xx}(\bar{x}(T))x_{1}(T),x_{1}(T)\rangle
]+o(\varepsilon).
\end{array}
\label{est6}%
\end{equation}

\end{theorem}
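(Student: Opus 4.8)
The plan is to handle the five estimates (\ref{est1})--(\ref{est5}) by the standard spike-variation machinery: for each process in question, write down the SDE it solves, apply the Burkholder--Davis--Gundy (BDG) inequality together with Gronwall's lemma to its coefficients, and carefully track the powers of $\varepsilon$ produced by the indicator $I_{E_{\varepsilon}}$. The single recurring device is that $|E_{\varepsilon}|=\varepsilon$, so by H\"older's inequality any diffusion contribution of the form $(\int_{0}^{T}|g(t)|^{2}I_{E_{\varepsilon}}(t)\,dt)^{\beta}$ is dominated by $\varepsilon^{\beta-1}\int_{E_{\varepsilon}}|g(t)|^{2\beta}\,dt$, whose expectation is controlled by the moment condition built into $\mathcal{U}[0,T]$ together with (A1). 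I would first prove (\ref{est2}) from (\ref{vari1}): the only inhomogeneous term is $\delta\sigma(t)I_{E_{\varepsilon}}(t)$ in the diffusion, which by the device above contributes $O(\varepsilon^{\beta})$, so that $E[\sup_{t}|x_{1}(t)|^{2\beta}]=O(\varepsilon^{\beta})$.

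Next I would establish (\ref{est3}) for $x_{2}$ from (\ref{var2}). Its inhomogeneous drift contains $\delta b(t)I_{E_{\varepsilon}}(t)$, whose time-integral is directly $O(\varepsilon)$ (so its $2\beta$-th power is $O(\varepsilon^{2\beta})$), and the quadratic term $\frac{1}{2}b_{xx}(t)x_{1}(t)x_{1}(t)$, handled by $E[(\int_{0}^{T}|x_{1}|^{2}\,dt)^{2\beta}]\leq CE[\sup_{t}|x_{1}|^{4\beta}]=O(\varepsilon^{2\beta})$ via (\ref{est2}) applied with $2\beta$ in place of $\beta$. The diffusion terms $\delta\sigma_{x}(t)x_{1}(t)I_{E_{\varepsilon}}(t)$ and $\frac{1}{2}\sigma_{xx}(t)x_{1}x_{1}$ are treated likewise, the first combining the factor $\varepsilon^{\beta}$ gained from $I_{E_{\varepsilon}}$ with one factor $\sup_{t}|x_{1}|^{2\beta}$; all produce $O(\varepsilon^{2\beta})$. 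Estimate (\ref{est1}) for $x^{\varepsilon}-\bar{x}$ follows by the same scheme applied to (\ref{state1}), the leading inhomogeneity again being the diffusion jump on $E_{\varepsilon}$.

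For (\ref{est4}) and (\ref{est5}) I would introduce the remainders $\eta:=x^{\varepsilon}-\bar{x}-x_{1}$ and $\xi:=x^{\varepsilon}-\bar{x}-x_{1}-x_{2}$, derive the SDEs they satisfy by Taylor-expanding $b$ and $\sigma$ along $\bar{x}$ and $x^{\varepsilon}$ to first, respectively second, order with integral remainder, and apply the same BDG/Gronwall estimate. In $\eta$'s equation the homogeneous part is $b_{x}\eta\,dt+\sigma_{x}\eta\,dW$, while each inhomogeneous term (the quadratic remainders $O(|x^{\varepsilon}-\bar{x}|^{2})$, the drift jump $\delta b\,I_{E_{\varepsilon}}$, and the cross term $\delta\sigma_{x}x_{1}I_{E_{\varepsilon}}$) is $O(\varepsilon^{2\beta})$ after the device and (\ref{est1})--(\ref{est2}), giving (\ref{est4}). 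Finally, (\ref{est6}) follows from a second-order Taylor expansion of $\phi$ at $\bar{x}(T)$ along $x^{\varepsilon}(T)-\bar{x}(T)=x_{1}(T)+x_{2}(T)+\xi(T)$: the linear $\xi$-term is $o(\varepsilon)$ by (\ref{est5}) and Cauchy--Schwarz, the cross terms $\langle\phi_{xx}x_{1},x_{2}\rangle$ are $O(\varepsilon^{3/2})$ by (\ref{est2})--(\ref{est3}), and the remaining contributions are discarded by (\ref{est3})--(\ref{est5}).

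I expect the main obstacle to be the sharp remainder estimate (\ref{est5}), i.e. upgrading the bound on $\xi$ from $O(\varepsilon^{2\beta})$ to $o(\varepsilon^{2\beta})$. Unlike the others, this cannot be obtained by crude boundedness: the leading terms in $\xi$'s equation are differences such as $[b_{xx}(t,\bar{x}+\theta(x^{\varepsilon}-\bar{x}))-b_{xx}(t)]x_{1}x_{1}$ (and the corresponding $\sigma_{xx}$ term), together with the genuinely cubic remainders. To see that their rescaled contribution vanishes I would invoke the continuity of $b_{xx},\sigma_{xx}$ in (A1) and (\ref{est1}) (so the argument of the second derivatives converges to $\bar{x}$ as $\varepsilon\to0$), and then run a dominated-convergence / uniform-integrability argument on $\varepsilon^{-2\beta}E[\sup_{t}|\xi|^{2\beta}]$ rather than a pointwise bound. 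The same uniform-integrability reasoning is needed to push the $o(|x^{\varepsilon}(T)-\bar{x}(T)|^{2})$ Taylor remainder of $\phi$ into the $o(\varepsilon)$ term of (\ref{est6}). Keeping the cross term $\delta\sigma_{x}x_{1}I_{E_{\varepsilon}}$ at exactly the right order throughout, given that $x_{1}$ is only $O(\sqrt{\varepsilon})$ in $L^{2}$, is the other place where care is required.
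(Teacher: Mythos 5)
Your proposal is correct and follows essentially the same route as the paper's proof: the paper does not prove Theorem \ref{th1} itself but defers to \cite{P90, YZ}, and the argument given there is exactly your scheme — BDG plus Gronwall on each remainder SDE, the H\"older trick exploiting $|E_{\varepsilon}|=\varepsilon$ to track powers of $\varepsilon$, and continuity of $b_{xx},\sigma_{xx},\phi_{xx}$ combined with a uniform-integrability/dominated-convergence argument to upgrade the crude $O(\varepsilon^{2\beta})$ bound on $x^{\varepsilon}-\bar{x}-x_{1}-x_{2}$ to $o(\varepsilon^{2\beta})$ and to absorb the Taylor remainder of $\phi$ into $o(\varepsilon)$. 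You have also correctly identified the genuine crux (the little-$o$ estimate (\ref{est5})), which is precisely where the cited proofs expend their effort.
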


From this result, we can simply write $x_{1}(\cdot)=O(\sqrt{\varepsilon})$,
$x_{2}(\cdot)=O(\varepsilon)$ and%
\begin{equation}
x^{\varepsilon}(\cdot)=\bar{x}(\cdot)+x_{1}(\cdot)+x_{2}(\cdot)+o(\varepsilon
).\label{var3}%
\end{equation}
This equation is the variational equation for SDE (\ref{state1}) obtained by
Peng in \cite{P90}. Furthermore, he proved that%
\[
E[\langle\phi_{x}(\bar{x}(T)),x_{1}(T)\rangle]=O(\varepsilon).
\]
This is supprise, because $x_{1}(T)=O(\sqrt{\varepsilon})$. The reason is that
the order$\sqrt{\varepsilon}$ term is in the stochastic integral with respect
to $W$. Notice that $z$ in BSDE (\ref{state2}) is still in the stochastic
integral with respect to $W$. If we combine the order $\sqrt{\varepsilon}$
term and $z$, the other terms are $O(\varepsilon)$. Maybe we can get the
variational equation. In Section 3, we will give a rigorous proof of this idea.

\section{Variational equation for BSDE and maximum principle}

\subsection{Peng's open problem}

Suppose $d=1$ for the simplicity of presentation. The results for $d>1$ will
be given in the next subsection.

We consider the control system: SDE (\ref{state1}) and BSDE (\ref{state2}).
The cost function $J(u(\cdot))$ is defined in (\ref{cost2}). The control
problem is to minimize $J(u(\cdot))$ over $\mathcal{U}[0,T]$.

We need the following assumption:

\begin{description}
\item[(A2)] $f$, $\phi$ are twice continuously differentiable with respect to
$(x,y,z)$; $f$, $Df$, $D^{2}f$ are continous in $(x,y,z,u)$; $Df$, $D^{2}f$,
$\phi_{xx}$ are bounded; $\phi_{x}$ are bounded by $C(1+|x|)$.
\end{description}

$Df$ is the gradient of $f$ with respect to $(x,y,z)$, $D^{2}f$ is the Hessian
matrix of $f$ with respect to $(x,y,z)$.

Let $\bar{u}(\cdot)$ be the optimal control and let $(\bar{x}(\cdot),\bar
{y}(\cdot),\bar{z}(\cdot))$ be the corresponding solution of equations
(\ref{state1}) and (\ref{state2}). Similarly, we define $(x^{\varepsilon
}(\cdot),y^{\varepsilon}(\cdot),z^{\varepsilon}(\cdot),u^{\varepsilon}%
(\cdot))$.

In order to obtain the variational equation for BSDE (\ref{state2}), we
introduce the following adjoint equations:%
\begin{equation}
\left\{
\begin{array}
[c]{l}%
-dp(t)=\{f_{y}(t)p(t)+[f_{z}(t)\sigma_{x}^{T}(t)+b_{x}^{T}(t)]p(t)+f_{z}%
(t)q(t)+\sigma_{x}^{T}(t)q(t)+f_{x}(t)\}dt-q(t)dW(t),\\
p(T)=\phi_{x}(\bar{x}(T)),
\end{array}
\right.  \label{adjoint1}%
\end{equation}%
\begin{equation}
\left\{
\begin{array}
[c]{rl}%
-dP(t)= & \{f_{y}(t)P(t)+[f_{z}(t)\sigma_{x}^{T}(t)+b_{x}^{T}%
(t)]P(t)+P(t)[f_{z}(t)\sigma_{x}(t)+b_{x}(t)]+\sigma_{x}^{T}(t)P(t)\sigma
_{x}(t)\\
& +f_{z}(t)Q(t)+\sigma_{x}^{T}(t)Q(t)+Q(t)\sigma_{x}(t)+b_{xx}^{T}%
(t)p(t)+\sigma_{xx}^{T}(t)[f_{z}(t)p(t)+q(t)]\\
& +[I_{n\times n},p(t),\sigma_{x}^{T}(t)p(t)+q(t)]D^{2}f(t)[I_{n\times
n},p(t),\sigma_{x}^{T}(t)p(t)+q(t)]^{T}\}dt-Q(t)dW(t),\\
P(T)= & \phi_{xx}(\bar{x}(T)),
\end{array}
\right.  \label{adjoint2}%
\end{equation}
where%
\begin{equation}
f_{x}(t)=f_{x}(t,\bar{x}(t),\bar{y}(t),\bar{z}(t),\bar{u}(t)),\text{
}p(t)=(p^{1}(t),\ldots,p^{n}(t))^{T},\text{ }b_{xx}^{T}(t)p(t)=\sum_{i=1}%
^{n}p^{i}(t)b_{xx}^{i}(t), \label{nota2}%
\end{equation}
similar for $f_{y}(t)$, $f_{z}(t)$, $D^{2}f(t)$ and $\sigma_{xx}^{T}%
(t)[f_{z}(t)p(t)+q(t)]$. By the assumptions (A1) and (A2), it is easy to check
that the above BSDEs have unique solutions $(p(\cdot),q(\cdot))$ and
$(P(\cdot),Q(\cdot))$ respectively.

Applying It\^{o}'s formula to $\langle p(t),x_{1}(t)+x_{2}(t)\rangle+\frac
{1}{2}\langle P(t)x_{1}(t),x_{1}(t)\rangle$, we can get the following lemma by
simple calculation.

\begin{lemma}
\label{pr2}We have%
\begin{equation}%
\begin{array}
[c]{l}%
\langle p(T),x_{1}(T)+x_{2}(T)\rangle+\frac{1}{2}\langle P(T)x_{1}%
(T),x_{1}(T)\rangle=\langle p(t),x_{1}(t)+x_{2}(t)\rangle+\frac{1}{2}\langle
P(t)x_{1}(t),x_{1}(t)\rangle\\
+\int_{t}^{T}\{[\langle p(s),\delta b(s)\rangle+\langle q(s),\delta
\sigma(s)\rangle+\frac{1}{2}\langle P(s)\delta\sigma(s),\delta\sigma
(s)\rangle]I_{E_{\varepsilon}}(s)\\
-\langle f_{x}(s)+f_{y}(s)p(s)+f_{z}(s)[\sigma_{x}^{T}(s)p(s)+q(s)],x_{1}%
(s)+x_{2}(s)\rangle\\
-\frac{1}{2}\langle\{f_{y}(s)P(s)+f_{z}(s)[\sigma_{xx}^{T}(s)p(s)+P(s)\sigma
_{x}(s)+\sigma_{x}^{T}(s)P(s)+Q(s)]\\
+[I_{n\times n},p(s),\sigma_{x}^{T}(s)p(s)+q(s)]D^{2}f(s)[I_{n\times
n},p(s),\sigma_{x}^{T}(s)p(s)+q(s)]^{T}\}x_{1}(s),x_{1}(s)\rangle\}ds\\
+\int_{t}^{T}\{\langle p(s),\delta\sigma(s)\rangle I_{E_{\varepsilon}%
}(s)+\langle\sigma_{x}^{T}(s)p(s)+q(s),x_{1}(s)+x_{2}(s)\rangle\\
+\langle\delta\sigma_{x}^{T}(s)p(s)+\frac{1}{2}P(s)\delta\sigma(s)+\frac{1}%
{2}P^{T}(s)\delta\sigma(s),x_{1}(s)\rangle I_{E_{\varepsilon}}(s)\\
+\frac{1}{2}\langle\lbrack\sigma_{xx}^{T}(s)p(s)+P(s)\sigma_{x}(s)+\sigma
_{x}^{T}(s)P(s)+Q(s)]x_{1}(s),x_{1}(s)\rangle\}dW(s)\\
+\int_{t}^{T}\langle\delta\sigma_{x}^{T}(s)q(s)+\frac{1}{2}[\sigma_{x}%
^{T}(s)P(s)+\sigma_{x}^{T}(s)P^{T}(s)+Q(s)+Q^{T}(s)]\delta\sigma
(s),x_{1}(s)\rangle I_{E_{\varepsilon}}(s)ds.
\end{array}
\label{vm1}%
\end{equation}

\end{lemma}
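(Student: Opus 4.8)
The plan is to apply It\^{o}'s product formula to the three pieces $\langle p(t),x_1(t)\rangle$, $\langle p(t),x_2(t)\rangle$ and $\tfrac12\langle P(t)x_1(t),x_1(t)\rangle$ separately, then add them and integrate from $t$ to $T$, using $p(T)=\phi_x(\bar x(T))$ and $P(T)=\phi_{xx}(\bar x(T))$ to produce the boundary term on the left. First I would record the dynamics in the forward form $dp=-\{\cdots\}dt+q\,dW$ and $dP=-\{\cdots\}dt+Q\,dW$ read off from (\ref{adjoint1})--(\ref{adjoint2}), together with $dx_1$, $dx_2$ from (\ref{vari1})--(\ref{var2}), splitting each diffusion coefficient into its $\sigma_x$-part and its $\delta\sigma I_{E_\varepsilon}$-part, since the whole point of the lemma is that the $I_{E_\varepsilon}$-contributions must be kept visible rather than absorbed.

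For the two bilinear pieces I would use $d\langle p,x_i\rangle=\langle dp,x_i\rangle+\langle p,dx_i\rangle+\langle dp,dx_i\rangle$, where the cross-variation only pairs the $q\,dW$ part of $p$ with the diffusion of $x_i$. The role of the adjoint drift in (\ref{adjoint1}) is that the $b_x^T p$ and $\sigma_x^T q$ it carries cancel exactly against $\langle p,b_x x_i\rangle=\langle b_x^T p,x_i\rangle$ and against the cross-variation $\langle q,\sigma_x x_i\rangle=\langle\sigma_x^T q,x_i\rangle$; what survives is the first-order integrand $-\langle f_x+f_y p+f_z[\sigma_x^T p+q],x_i\rangle$, the new $I_{E_\varepsilon}$ pieces $\langle p,\delta b\rangle$, $\langle q,\delta\sigma\rangle$, $\langle\delta\sigma_x^T q,x_1\rangle$, the martingale piece $\tfrac12\langle\sigma_{xx}^T p\,x_1,x_1\rangle$, and the Hessian drift pieces $\tfrac12\langle(b_{xx}^T p+\sigma_{xx}^T q)x_1,x_1\rangle$ coming from the $\tfrac12 b_{xx}x_1x_1$ and $\tfrac12\sigma_{xx}x_1x_1$ coefficients of $x_2$. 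These last Hessian pieces are exactly the ones that will be met and cancelled by the $b_{xx}^T p$ and $\sigma_{xx}^T[f_z p+q]$ terms deliberately placed in the drift of (\ref{adjoint2}).

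The delicate piece is $\tfrac12\langle Px_1,x_1\rangle$. I would compute it by setting $A=Px_1$, first expanding $dA=(dP)x_1+P\,dx_1+(dP)(dx_1)$ so that the It\^{o} correction $(dP)(dx_1)=Q(\sigma_x x_1+\delta\sigma I_{E_\varepsilon})\,dt$ enters the drift of $A$, and then expanding $\tfrac12\,d\langle A,x_1\rangle=\tfrac12[\langle dA,x_1\rangle+\langle A,dx_1\rangle+\langle dA,dx_1\rangle]$. The second-order corrections are the crux: the quadratic covariation $\langle dA,dx_1\rangle=\langle Qx_1+P(\sigma_x x_1+\delta\sigma I_{E_\varepsilon}),\sigma_x x_1+\delta\sigma I_{E_\varepsilon}\rangle\,dt$ produces the $\tfrac12\langle P\delta\sigma,\delta\sigma\rangle I_{E_\varepsilon}$ term, the $\sigma_x^T P\sigma_x$ and $\sigma_x^T Q$ contributions to the quadratic drift, and --- together with the $\tfrac12\langle Q\delta\sigma,x_1\rangle I_{E_\varepsilon}$ piece sitting in the drift of $A$ --- the full symmetrised coefficient $\tfrac12[\sigma_x^T P+\sigma_x^T P^T+Q+Q^T]\delta\sigma$ of the final $I_{E_\varepsilon}\,ds$ integral. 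Because $P$ need not be symmetric, I would keep $P$ and $P^T$ distinct throughout; this is precisely why the martingale integrand carries $\tfrac12 P\delta\sigma+\tfrac12 P^T\delta\sigma$ and why both $P$ and $P^T$ appear in the last integral.

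Finally I would collect all $dt$-integrands, all $dW$-integrands, and the residual $I_{E_\varepsilon}\,dt$-integrands, and match them to the three integrals in (\ref{vm1}). The main obstacle --- and the reason (\ref{adjoint2}) is written exactly as it is --- is verifying that the entire non-$I_{E_\varepsilon}$ part of the drift of $P$ (the $b_x^T P+P b_x$, $\sigma_x^T P\sigma_x$, $f_z[\cdots]$, $\sigma_x^T Q+Q\sigma_x$, $b_{xx}^T p+\sigma_{xx}^T[f_z p+q]$ pieces and the $D^2f$ quadratic form) is precisely what makes the quadratic drift collapse, after cancellation against $\tfrac12\langle Pb_x x_1,x_1\rangle$, $\tfrac12\langle b_x^T P x_1,x_1\rangle$, the covariation terms above, and the Hessian terms from $x_2$, to $-\tfrac12\langle\{f_y P+f_z[\sigma_{xx}^T p+P\sigma_x+\sigma_x^T P+Q]+[I_{n\times n},p,\sigma_x^T p+q]D^2f[I_{n\times n},p,\sigma_x^T p+q]^T\}x_1,x_1\rangle$. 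Matching signs and transposes in this collapse is the only genuinely error-prone step; everything else is bookkeeping.
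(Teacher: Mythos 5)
Your proposal is correct and follows essentially the same route as the paper, which likewise obtains (\ref{vm1}) by applying It\^{o}'s formula to $\langle p(t),x_{1}(t)+x_{2}(t)\rangle+\frac{1}{2}\langle P(t)x_{1}(t),x_{1}(t)\rangle$ and carrying out the (unwritten) bookkeeping; your identification of the cancellations driven by the drifts of (\ref{adjoint1})--(\ref{adjoint2}), and your care in keeping $P$ and $P^{T}$ distinct, match what that calculation requires.
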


By Theorem \ref{th1}, it is easy to check that the last line of equation
(\ref{vm1}) is $o(\varepsilon)$ and%
\[
\phi(x^{\varepsilon}(T))=\phi(\bar{x}(T))+\langle p(T),x_{1}(T)+x_{2}%
(T)\rangle+\frac{1}{2}\langle P(T)x_{1}(T),x_{1}(T)\rangle+o(\varepsilon).
\]
We set%
\[%
\begin{array}
[c]{rl}%
\bar{y}^{\varepsilon}(t)= & y^{\varepsilon}(t)-[\langle p(t),x_{1}%
(t)+x_{2}(t)\rangle+\frac{1}{2}\langle P(t)x_{1}(t),x_{1}(t)\rangle],\\
\bar{z}^{\varepsilon}(t)= & z^{\varepsilon}(t)-\{\langle p(t),\delta
\sigma(t)\rangle I_{E_{\varepsilon}}(t)+\langle\sigma_{x}^{T}%
(t)p(t)+q(t),x_{1}(t)+x_{2}(t)\rangle\\
& +\langle\delta\sigma_{x}^{T}(t)p(t)+\frac{1}{2}P(t)\delta\sigma(t)+\frac
{1}{2}P^{T}(t)\delta\sigma(t),x_{1}(t)\rangle I_{E_{\varepsilon}}(t)\\
& +\frac{1}{2}\langle\lbrack\sigma_{xx}^{T}(t)p(t)+P(t)\sigma_{x}%
(t)+\sigma_{x}^{T}(t)P(t)+Q(t)]x_{1}(t),x_{1}(t)\rangle\}.
\end{array}
\]

\begin{remark}
\label{newre12} It is important to note that the term $\langle p(t),\delta
\sigma(t)\rangle I_{E_{\varepsilon}}(t)$ in the $\bar{z}^{\varepsilon}(t)$
comes from the It\^{o}'s formula for $\langle p(t),x_{1}(t)\rangle$. Note that
$(\langle p(t),\delta\sigma(t)\rangle I_{E_{\varepsilon}}(t))^{i}=(\langle
p(t),\delta\sigma(t)\rangle)^{i}I_{E_{\varepsilon}}(t)$ for any integer
$i\geq1$, so it is not helpful to use the second-order Taylor expansion for
treating this term, which is completely different from the terms $x_{1}(t)$,
$x_{2}(t)$, $x_{1}(t)I_{E_{\varepsilon}}(t)$ and $x_{1}(t)(x_{1}(t))^{T}$. In
the following, we will show that this term is indeed in the variation of $z$,
which is important for getting the variational equations for $(y,z)$.
\end{remark}

Then by Lemma \ref{pr2}, we can get%
\begin{equation}%
\begin{array}
[c]{cl}%
\bar{y}^{\varepsilon}(t)= & \phi(\bar{x}(T))+o(\varepsilon)+\int_{t}%
^{T}\{[\langle p(s),\delta b(s)\rangle+\langle q(s),\delta\sigma
(s)\rangle+\frac{1}{2}\langle P(s)\delta\sigma(s),\delta\sigma(s)\rangle
]I_{E_{\varepsilon}}(s)\\
& +f(s,x^{\varepsilon}(s),y^{\varepsilon}(s),z^{\varepsilon}(s),u^{\varepsilon
}(s))-\langle f_{x}(s)+f_{y}(s)p(s)+f_{z}(s)[\sigma_{x}^{T}(s)p(s)+q(s)],x_{1}%
(s)+x_{2}(s)\rangle\\
& -\frac{1}{2}\langle\{f_{y}(s)P(s)+f_{z}(s)[\sigma_{xx}^{T}(s)p(s)+P(s)\sigma
_{x}(s)+\sigma_{x}^{T}(s)P(s)+Q(s)]\\
& +[I_{n\times n},p(s),\sigma_{x}^{T}(s)p(s)+q(s)]D^{2}f(s)[I_{n\times
n},p(s),\sigma_{x}^{T}(s)p(s)+q(s)]^{T}\}x_{1}(s),x_{1}(s)\rangle\}ds\\
& -\int_{t}^{T}\bar{z}^{\varepsilon}(s)dW(s).
\end{array}
\label{cbsde1}%
\end{equation}

\begin{remark}
By the standard estimates of BSDEs, we can show that $\bar{y}^{\varepsilon
}(t)-\bar{y}(t)=O(\varepsilon)$, $\bar{z}^{\varepsilon}(t)-\bar{z}%
(t)=O(\varepsilon)$ (see the proof of Theorem \ref{th3}), which is the reason
for constructing adjoint equations (\ref{adjoint1}) and (\ref{adjoint2}).
\end{remark}

Suppose that%
\begin{equation}%
\begin{array}
[c]{l}%
\bar{y}^{\varepsilon}(t)=\bar{y}(t)+\hat{y}(t)+o(\varepsilon),\\
\bar{z}^{\varepsilon}(t)=\bar{z}(t)+\hat{z}(t)+o(\varepsilon).
\end{array}
\label{newassum}%
\end{equation}
Then from BSDE (\ref{cbsde1}) and the Taylor expansion, we consider the
following BSDE:%
\begin{equation}%
\begin{array}
[c]{cl}%
\hat{y}(t)= & \int_{t}^{T}\{f_{y}(s)\hat{y}(s)+f_{z}(s)\hat{z}(s)+[\langle
p(s),\delta b(s)\rangle+\langle q(s),\delta\sigma(s)\rangle+\frac{1}{2}\langle
P(s)\delta\sigma(s),\delta\sigma(s)\rangle\\
& +f(s,\bar{x}(s),\bar{y}(s),\bar{z}(s)+\langle p(s),\delta\sigma
(s)\rangle,u)-f(s,\bar{x}(s),\bar{y}(s),\bar{z}(s),\bar{u}%
(s))]I_{E_{\varepsilon}}(s)\}ds\\
& -\int_{t}^{T}\hat{z}(s)dW(s).
\end{array}
\label{vbsde1}%
\end{equation}
In the following theorem, we will prove the above assumption (\ref{newassum}).

\begin{theorem}
\label{th3}Suppose (A1) and (A2) hold. Then, for any $\beta\geq2$,%
\begin{equation}
E[\sup_{t\in\lbrack0,T]}|\bar{y}^{\varepsilon}(t)-\bar{y}(t)|^{2}+\int_{0}%
^{T}|\bar{z}^{\varepsilon}(t)-\bar{z}(t)|^{2}dt]=O(\varepsilon^{2}),
\label{vbs1}%
\end{equation}%
\begin{equation}
E[\sup_{t\in\lbrack0,T]}|\bar{y}^{\varepsilon}(t)-\bar{y}(t)|^{\beta}%
+(\int_{0}^{T}|\bar{z}^{\varepsilon}(t)-\bar{z}(t)|^{2}dt)^{\beta
/2}]=o(\varepsilon^{\beta/2}), \label{vbs2}%
\end{equation}%
\begin{equation}
E[\sup_{t\in\lbrack0,T]}|\hat{y}(t)|^{2}+\int_{0}^{T}|\hat{z}(t)|^{2}%
dt]=O(\varepsilon^{2}), \label{vbs3}%
\end{equation}%
\begin{equation}
E[\sup_{t\in\lbrack0,T]}|\bar{y}^{\varepsilon}(t)-\bar{y}(t)-\hat{y}%
(t)|^{2}+\int_{0}^{T}|\bar{z}^{\varepsilon}(t)-\bar{z}(t)-\hat{z}%
(t)|^{2}dt]=o(\varepsilon^{2}). \label{vbs4}%
\end{equation}

\end{theorem}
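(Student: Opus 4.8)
The plan is to derive all four estimates from the standard $L^2$ and $L^\beta$ a priori estimates for BSDEs, feeding in Theorem \ref{th1} for the moment bounds on $x_1$, $x_2$ and $x^\varepsilon-\bar x$. I would prove them in the order (\ref{vbs1}), (\ref{vbs2}), (\ref{vbs3}), (\ref{vbs4}), since the last estimate will consume the higher moments furnished by (\ref{vbs2}). The organising device is to write $y^\varepsilon=\bar y^\varepsilon+M$ and $z^\varepsilon=\bar z^\varepsilon+N$, where $M$ and $N$ are the explicit correction processes in the definitions of $\bar y^\varepsilon,\bar z^\varepsilon$; the adjoint equations (\ref{adjoint1}) and (\ref{adjoint2}) are engineered precisely so that, away from $E_\varepsilon$, the first- and second-order contributions of the generator cancel.

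For (\ref{vbs1}) and (\ref{vbs2}) I would subtract the optimal BSDE from (\ref{cbsde1}), so that $(\bar y^\varepsilon-\bar y,\bar z^\varepsilon-\bar z)$ solves a BSDE whose generator, after substituting $y^\varepsilon-\bar y=(\bar y^\varepsilon-\bar y)+M$, $z^\varepsilon-\bar z=(\bar z^\varepsilon-\bar z)+N$ and Taylor expanding $f$, splits into the Lipschitz part $f_y(\bar y^\varepsilon-\bar y)+f_z(\bar z^\varepsilon-\bar z)$ and a free term $g^\varepsilon$. The terminal datum is $o(\varepsilon)$ in every $L^p$ by the second-order Taylor expansion of $\phi$, the identities $p(T)=\phi_x$, $P(T)=\phi_{xx}$, and (\ref{est5}). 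Off $E_\varepsilon$, $g^\varepsilon$ is a genuine $o(\varepsilon)$-density: the terms linear in $x_1+x_2$ cancel the summand $-\langle f_x+f_yp+f_z(\sigma_x^Tp+q),x_1+x_2\rangle$ of (\ref{cbsde1}), and the terms quadratic in $x_1$ cancel the quadratic-in-$x_1$ coefficient in (\ref{cbsde1}) -- in particular the Hessian block $[I_{n},p,\sigma_x^Tp+q]D^2f[I_{n},p,\sigma_x^Tp+q]^T$ is exactly the form generated by the increments $(x^\varepsilon-\bar x,M,N)\approx([I_{n},p,\sigma_x^Tp+q]^Tx_1)$ -- so only products like $x_1x_2$ and the remainder $x^\varepsilon-\bar x-x_1-x_2$ survive. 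On $E_\varepsilon$ the survivors are $[\langle p,\delta b\rangle+\langle q,\delta\sigma\rangle+\tfrac12\langle P\delta\sigma,\delta\sigma\rangle]I_{E_\varepsilon}$ and the $z$-jump $f(s,\bar x,\bar y,\bar z+\langle p,\delta\sigma\rangle,u)-f(s,\bar x,\bar y,\bar z,\bar u)$.

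The hard part is the term $\langle q,\delta\sigma\rangle I_{E_\varepsilon}$ (and its $Q$-analogue), because $q$ is only square-integrable in $(t,\omega)$ and need not be bounded in $t$. Using $\int_{E_\varepsilon}|q||\delta\sigma|\,ds\le(\int_{E_\varepsilon}|q|^2ds)^{1/2}(\int_{E_\varepsilon}|\delta\sigma|^2ds)^{1/2}$ with the bounded second factor contributing $O(\sqrt\varepsilon)$, I get $E(\int_{E_\varepsilon}|q||\delta\sigma|ds)^2\le C\varepsilon\int_{E_\varepsilon}E|q|^2ds$; this is $O(\varepsilon^2)$ once one knows $\sup_tE|q(t)|^2<\infty$, the uniform-in-time regularity of the adjoint integrands, which I would verify from the boundedness and smoothness in (A1)--(A2). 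For (\ref{vbs2}) the same splitting with H\"older gives, up to a constant, $\varepsilon^{\beta/2}(E(\int_{E_\varepsilon}|q|^2ds)^{\beta})^{1/2}$, and since $\int_{E_\varepsilon}|q|^2ds\downarrow0$ is dominated by $\int_0^T|q|^2ds$, dominated convergence makes the second factor $o(1)$ -- this is exactly why (\ref{vbs2}) is $o(\varepsilon^{\beta/2})$ and not $O(\varepsilon^{\beta})$. Estimate (\ref{vbs3}) is then immediate: it is the same computation applied to (\ref{vbsde1}), whose only inhomogeneity is the $E_\varepsilon$-supported term just discussed, with zero terminal value.

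Finally, for (\ref{vbs4}) I would put $\tilde y=\bar y^\varepsilon-\bar y-\hat y$, $\tilde z=\bar z^\varepsilon-\bar z-\hat z$ and subtract (\ref{vbsde1}) from the BSDE of $(\bar y^\varepsilon-\bar y,\bar z^\varepsilon-\bar z)$. The crucial feature is that the entire $E_\varepsilon$-supported inhomogeneity -- including the troublesome $\langle q,\delta\sigma\rangle I_{E_\varepsilon}$ and the un-expanded jump $\langle p,\delta\sigma\rangle$ -- occurs identically in both equations and drops out, so the free term of $(\tilde y,\tilde z)$ is only higher order: off $E_\varepsilon$ the quadratic cross terms $D^2f\cdot(x^\varepsilon-\bar x)\cdot(\bar y^\varepsilon-\bar y,\bar z^\varepsilon-\bar z)$ and the third-order Taylor remainder, and on $E_\varepsilon$ the gap between the true generator and the frozen one of (\ref{vbsde1}), which carries a factor $x_1=O(\sqrt\varepsilon)$ on a set of length $\varepsilon$ and is hence $o(\varepsilon^2)$. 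The one genuinely new estimate is the cross term: by Cauchy--Schwarz $E(\int_0^T|x^\varepsilon-\bar x|\,|\bar y^\varepsilon-\bar y|ds)^2\le C(E\sup|\bar y^\varepsilon-\bar y|^4)^{1/2}(E\sup|x^\varepsilon-\bar x|^4)^{1/2}$, and (\ref{vbs2}) with $\beta=4$ gives $E\sup|\bar y^\varepsilon-\bar y|^4=o(\varepsilon^2)$ while (\ref{est1}) gives $E\sup|x^\varepsilon-\bar x|^4=O(\varepsilon^2)$, so the product is $o(\varepsilon^2)$. Since the terminal datum is $o(\varepsilon)$ in $L^2$, the $L^2$ estimate closes the argument; the whole difficulty is concentrated in the cancellations of the second paragraph and in the $q$-term, which is what forces (\ref{vbs2}) to be proved before (\ref{vbs4}).
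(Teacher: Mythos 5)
Your overall skeleton coincides with the paper's own proof: the same BSDE for $(\bar y^{\varepsilon}-\bar y,\bar z^{\varepsilon}-\bar z)$ with mean-value coefficients, the same exact cancellation of the $E_{\varepsilon}$-supported inhomogeneity when passing to $\bar y^{\varepsilon}-\bar y-\hat y$, and the same use of Theorem \ref{th1}, fourth moments and Cauchy--Schwarz for the cross terms in (\ref{vbs4}). But there is a genuine gap at the point you yourself call the hard part: you dispose of $\langle q,\delta\sigma\rangle I_{E_{\varepsilon}}$ by asserting $\sup_{t}E|q(t)|^{2}<\infty$ and claiming this uniform-in-time regularity can be ``verified from the boundedness and smoothness in (A1)--(A2)''. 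It cannot. The adjoint equation (\ref{adjoint1}) is a BSDE whose coefficients $f_{y}(t)$, $f_{z}(t)$, $b_{x}(t)$, $\sigma_{x}(t)$, $f_{x}(t)$ depend on the optimal control $\bar u(t)$, which is merely progressively measurable, and on $\bar z(t)$, which is merely square integrable; hence $q$ inherits no pointwise-in-time moment regularity (this would require, e.g., Malliavin differentiability of $\bar u$). Already for the martingale-representation integrand of a terminal value with all moments the claim fails: for $\xi=\int_{0}^{T}h(s)dW(s)$ with $h$ deterministic, square integrable and unbounded, one has $q=h$ and $\sup_{t}E|q(t)|^{2}=\infty$. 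All that is available under (A1)--(A2) is $E[(\int_{0}^{T}|q(s)|^{2}ds)^{p}]<\infty$ for every $p$, and similarly for $Q$.

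The paper never uses your claim: every $q$- or $Q$-dependent term is handled by Cauchy--Schwarz in time followed by the absolute-continuity/dominated-convergence observation $E[(\int_{E_{\varepsilon}}|q(s)|^{2}ds)^{p}]\rightarrow0$ as $\varepsilon\rightarrow0$ --- see (\ref{vbs13}) and the display after it --- and this is exactly why (\ref{vbs2}) and (\ref{vbs4}) carry $o$-rates rather than $O$-rates. Your own treatment of (\ref{vbs2}) is precisely this argument and is correct; had you applied it to (\ref{vbs1}) and (\ref{vbs3}) you would only obtain $o(\varepsilon)$, and the $O(\varepsilon^{2})$ statements there should instead be read with $E_{\varepsilon}=[s,s+\varepsilon]$ at a Lebesgue point $s$ of the relevant time-integrable functions (as announced in the introduction of the paper), not justified by uniform moments of $q$; fortunately nothing downstream needs more than the $o$-rates and their higher-moment analogues. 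Two smaller points: your bound $\int_{E_{\varepsilon}}|\delta\sigma|^{2}ds\leq C\varepsilon$ presumes $\delta\sigma$ bounded, whereas (A1) only gives linear growth, so an extra H\"{o}lder step is needed; and your appeal to ``the third-order Taylor remainder'' in (\ref{vbs4}) is unavailable since $f$ is only $C^{2}$ in $(x,y,z)$ --- the paper instead keeps the mean-value Hessian $\tilde D^{2}f$ and shows $E[(\int_{0}^{T}|\tilde D^{2}f(s)-\frac{1}{2}D^{2}f(s)||q(s)|^{2}ds)^{\beta}]\rightarrow0$ by continuity of $D^{2}f$ and dominated convergence, which is one of the three key estimates in its proof.
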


\begin{proof}
We first prove (\ref{vbs1}) and (\ref{vbs2}). Set%
\[%
\begin{array}
[c]{l}%
I_{1}(t)=y^{\varepsilon}(t)-\bar{y}^{\varepsilon}(t),\text{ }I_{2}%
(t)=z^{\varepsilon}(t)-\bar{z}^{\varepsilon}(t)-\langle p(t),\delta
\sigma(t)\rangle I_{E_{\varepsilon}}(t),\\
I_{3}(t)=\langle p(t),\delta b(t)\rangle+\langle q(t),\delta\sigma
(t)\rangle+\frac{1}{2}\langle P(t)\delta\sigma(t),\delta\sigma(t)\rangle,\\
I_{4}(t)=f(t,x^{\varepsilon}(t),y^{\varepsilon}(t),z^{\varepsilon
}(t),u^{\varepsilon}(t))-f(t,\bar{x}(t)+x_{1}(t)+x_{2}(t),\bar{y}%
^{\varepsilon}(t)+I_{1}(t),\bar{z}^{\varepsilon}(t)+I_{2}(t),\bar{u}(t)),\\
I_{5}(t)=\delta\sigma_{x}^{T}(t)p(t)+\frac{1}{2}P(t)\delta\sigma(t)+\frac
{1}{2}P^{T}(t)\delta\sigma(t),\\
I_{6}(t)=[I_{n\times n},p(t),\sigma_{x}^{T}(t)p(t)+q(t)].
\end{array}
\]
Then we can get%
\begin{equation}%
\begin{array}
[c]{cl}%
\bar{y}^{\varepsilon}(t)-\bar{y}(t)= & o(\varepsilon)+\int_{t}^{T}%
\{I_{3}(s)I_{E_{\varepsilon}}(s)+I_{4}(s)+\tilde{f}_{y}(s)(\bar{y}%
^{\varepsilon}(s)-\bar{y}(s))+\tilde{f}_{z}(s)(\bar{z}^{\varepsilon}%
(s)-\bar{z}(s))\\
& +[(x_{1}(s)+x_{2}(s))^{T},I_{1}(s),I_{2}(s)]\tilde{D}^{2}f(s)[(x_{1}%
(s)+x_{2}(s))^{T},I_{1}(s),I_{2}(s)]^{T}\\
& +f_{z}(s)\langle I_{5}(s),x_{1}(s)\rangle I_{E_{\varepsilon}}(s)-\frac{1}%
{2}\langle I_{6}(s)D^{2}f(s)I_{6}^{T}(s)x_{1}(s),x_{1}(s)\rangle\}ds\\
& -\int_{t}^{T}(\bar{z}^{\varepsilon}(s)-\bar{z}(s))dW(s),
\end{array}
\label{vbs11}%
\end{equation}
where%
\[%
\begin{array}
[c]{l}%
f(t,\bar{x}(t)+x_{1}(t)+x_{2}(t),\bar{y}^{\varepsilon}(t)+I_{1}(t),\bar
{z}^{\varepsilon}(t)+I_{2}(t),\bar{u}(t))\\
-f(t,\bar{x}(t)+x_{1}(t)+x_{2}(t),\bar{y}(t)+I_{1}(t),\bar{z}(t)+I_{2}%
(t),\bar{u}(t))\\
=\tilde{f}_{y}(s)(\bar{y}^{\varepsilon}(s)-\bar{y}(s))+\tilde{f}_{z}%
(s)(\bar{z}^{\varepsilon}(s)-\bar{z}(s)),
\end{array}
\]%
\[%
\begin{array}
[c]{rl}%
\tilde{f}_{y}(s)= & \int_{0}^{1}f_{y}(s,\bar{x}(s)+x_{1}(s)+x_{2}(s),\bar
{y}(s)+I_{1}(s)+\mu(\bar{y}^{\varepsilon}(s)-\bar{y}(s)),\\
& \bar{z}(s)+I_{2}(s)+\mu(\bar{z}^{\varepsilon}(s)-\bar{z}(s)),\bar{u}%
(s))d\mu,\\
\tilde{f}_{z}(s)= & \int_{0}^{1}f_{z}(s,\bar{x}(s)+x_{1}(s)+x_{2}(s),\bar
{y}(s)+I_{1}(s)+\mu(\bar{y}^{\varepsilon}(s)-\bar{y}(s)),\\
& \bar{z}(s)+I_{2}(s)+\mu(\bar{z}^{\varepsilon}(s)-\bar{z}(s)),\bar{u}%
(s))d\mu,\\
\tilde{D}^{2}f(s)= & \int_{0}^{1}\int_{0}^{1}\lambda D^{2}f(s,\bar
{x}(s)+\lambda\mu(x_{1}(s)+x_{2}(s)),\bar{y}(s)+\lambda\mu I_{1}(s),\\
& \bar{z}(s)+\lambda\mu I_{2}(s),\bar{u}(s))d\lambda d\mu.
\end{array}
\]
Thus by Theorem \ref{th1}, equation (\ref{vbs11}) and standard estimates of
BSDEs, we can easily obtain (\ref{vbs1}) and (\ref{vbs2}). It is obviously for
equation (\ref{vbs3}). Now we prove (\ref{vbs4}). Set%
\[
\tilde{x}^{\varepsilon}(t)=x^{\varepsilon}(t)-\bar{x}(t)-x_{1}(t)-x_{2}%
(t),\text{ }\tilde{y}^{\varepsilon}(t)=\bar{y}^{\varepsilon}(t)-\bar
{y}(t)-\hat{y}(t),\text{ }\tilde{z}^{\varepsilon}(t)=\bar{z}^{\varepsilon
}(t)-\bar{z}(t)-\hat{z}(t).
\]
By equations (\ref{vbs11}) and (\ref{vbsde1}), we can get%
\begin{equation}%
\begin{array}
[c]{cl}%
\tilde{y}^{\varepsilon}(t)= & o(\varepsilon)+\int_{t}^{T}\{\tilde{f}%
_{y}(s)\tilde{y}^{\varepsilon}(s)+\tilde{f}_{z}(s)\tilde{z}^{\varepsilon
}(s)+(\tilde{f}_{y}(s)-f_{y}(s))\hat{y}(s)+(\tilde{f}_{z}(s)-f_{z}(s))\hat
{z}(s)\\
& +I_{4}(s)-[f(s,\bar{x}(s),\bar{y}(s),\bar{z}(s)+\langle p(s),\delta
\sigma(s)\rangle,u)-f(s)]I_{E_{\varepsilon}}(s)\\
& +[(x_{1}(s)+x_{2}(s))^{T},I_{1}(s),I_{2}(s)]\tilde{D}^{2}f(s)[(x_{1}%
(s)+x_{2}(s))^{T},I_{1}(s),I_{2}(s)]^{T}\\
& +f_{z}(s)\langle I_{5}(s),x_{1}(s)\rangle I_{E_{\varepsilon}}(s)-\frac{1}%
{2}\langle I_{6}(s)D^{2}f(s)I_{6}^{T}(s)x_{1}(s),x_{1}(s)\rangle\}ds\\
& -\int_{t}^{T}\tilde{z}^{\varepsilon}(s)dW(s).
\end{array}
\label{vbs12}%
\end{equation}
By Theorem \ref{th1}, it is easy to check that we only need to show that%
\[%
\begin{array}
[c]{l}%
E[(\int_{0}^{T}|(\tilde{f}_{y}(s)-f_{y}(s))\hat{y}(s)+(\tilde{f}_{z}%
(s)-f_{z}(s))\hat{z}(s)|ds)^{2}]=o(\varepsilon^{2}),\\
E[(\int_{0}^{T}|\langle I_{6}(s)(\tilde{D}^{2}f(s)-\frac{1}{2}D^{2}%
f(s))I_{6}^{T}(s)x_{1}(s),x_{1}(s)\rangle|ds)^{2}]=o(\varepsilon^{2}),\\
E[(\int_{0}^{T}|I_{4}(s)-[f(s,\bar{x}(s),\bar{y}(s),\bar{z}(s)+\langle
p(s),\delta\sigma(s)\rangle,u)-f(s)]I_{E_{\varepsilon}}(s)|ds)^{2}%
]=o(\varepsilon^{2}).
\end{array}
\]
Note that
\[
|\tilde{f}_{y}(s)-f_{y}(s)|+|\tilde{f}_{z}(s)-f_{z}(s)|\leq C(|x_{1}%
(s)+x_{2}(s)|+|I_{1}(s)|+|I_{2}(s)|+|\bar{y}^{\varepsilon}(s)-\bar
{y}(s)|+|\bar{z}^{\varepsilon}(s)-\bar{z}(s)|),
\]
and%
\begin{equation}%
\begin{array}
[c]{cl}%
E[(\int_{0}^{T}|q(s)x_{1}(s)\hat{z}(s)|ds)^{2}] & \leq E[\sup_{s\in
\lbrack0,T]}|x_{1}(s)|^{2}(\int_{0}^{T}|q(s)|^{2}ds)(\int_{0}^{T}|\hat
{z}(s)|^{2}ds)]\\
& \leq(E[(\int_{0}^{T}|\hat{z}(s)|^{2}ds)^{2}])^{1/2}(E[\sup_{s\in\lbrack
0,T]}|x_{1}(s)|^{8}])^{1/4}(E[(\int_{0}^{T}|q(s)|^{2}ds)^{4}])^{1/4}\\
& =o(\varepsilon^{2}),
\end{array}
\label{vbs13}%
\end{equation}
we can get $E[(\int_{0}^{T}|(\tilde{f}_{y}(s)-f_{y}(s))\hat{y}(s)+(\tilde
{f}_{z}(s)-f_{z}(s))\hat{z}(s)|ds)^{2}]=o(\varepsilon^{2})$. Since $D^{2}f$ is
bounded, we can get that for each $\beta\geq2$,%
\[
E[(\int_{0}^{T}|(\tilde{D}^{2}f(s)-\frac{1}{2}D^{2}f(s))||q(s)|^{2}ds)^{\beta
}]\rightarrow0\text{ as }\varepsilon\rightarrow0\text{.}%
\]
Thus we can easily deduce $E[(\int_{0}^{T}|\langle I_{6}(s)(\tilde{D}%
^{2}f(s)-\frac{1}{2}D^{2}f(s))I_{6}^{T}(s)x_{1}(s),x_{1}(s)\rangle
|ds)^{2}]=o(\varepsilon^{2})$. It is easy to verify that%
\[%
\begin{array}
[c]{l}%
|I_{4}(s)-[f(s,\bar{x}(s),\bar{y}(s),\bar{z}(s)+\langle p(s),\delta
\sigma(s)\rangle,u)-f(s)]I_{E_{\varepsilon}}(s)|\\
\leq C\{|\tilde{x}^{\varepsilon}(s)|+[|x_{1}(s)+x_{2}(s)|+|\bar{y}%
^{\varepsilon}(t)-\bar{y}(t)|+|\bar{z}^{\varepsilon}(s)-\bar{z}(s)|+|I_{1}%
(s)|+|I_{2}(s)|]I_{E_{\varepsilon}}(s)\}.
\end{array}
\]
Since%
\begin{align*}
E[(\int_{0}^{T}|q(s)x_{1}(s)|I_{E_{\varepsilon}}(s)ds)^{2}] &  \leq
E[\sup_{s\in\lbrack0,T]}|x_{1}(s)|^{2}\int_{E_{\varepsilon}}|q(s)|^{2}%
ds]\varepsilon\\
&  \leq(E[\sup_{s\in\lbrack0,T]}|x_{1}(s)|^{4}])^{1/2}(E[(\int_{E_{\varepsilon
}}|q(s)|^{2}ds)^{2}])^{1/2}\varepsilon\\
&  =o(\varepsilon^{2}),
\end{align*}
we can obtain $E[(\int_{0}^{T}|I_{4}(s)-[f(s,\bar{x}(s),\bar{y}(s),\bar
{z}(s)+\langle p(s),\delta\sigma(s)\rangle,u)-f(s)]I_{E_{\varepsilon}%
}(s)|ds)^{2}]=o(\varepsilon^{2})$. The proof is complete.
\end{proof}

Thus we obtain the following variational equation for BSDE (\ref{state2}):%
\begin{equation}%
\begin{array}
[c]{rl}%
y^{\varepsilon}(t)= & \bar{y}(t)+\langle p(t),x_{1}(t)+x_{2}(t)\rangle
+\frac{1}{2}\langle P(t)x_{1}(t),x_{1}(t)\rangle+\hat{y}(t)+o(\varepsilon),\\
z^{\varepsilon}(t)= & \bar{z}(t)+\langle p(t),\delta\sigma(t)\rangle
I_{E_{\varepsilon}}(t)+\langle\sigma_{x}^{T}(t)p(t)+q(t),x_{1}(t)+x_{2}%
(t)\rangle\\
& +\langle\delta\sigma_{x}^{T}(t)p(t)+\frac{1}{2}P(t)\delta\sigma(t)+\frac
{1}{2}P^{T}(t)\delta\sigma(t),x_{1}(t)\rangle I_{E_{\varepsilon}}(t)\\
& +\frac{1}{2}\langle\lbrack\sigma_{xx}^{T}(t)p(t)+P(t)\sigma_{x}%
(t)+\sigma_{x}^{T}(t)P(t)+Q(t)]x_{1}(t),x_{1}(t)\rangle+\hat{z}%
(t)+o(\varepsilon).
\end{array}
\label{veq123}%
\end{equation}

\begin{remark}
\label{newrem123} We can also give the variational equations for BSDE
(\ref{state2}) as in \cite{P90}. Set
\begin{equation}
y_{1}(t)=\langle p(t),x_{1}(t)\rangle,\text{ }z_{1}(t)=\langle p(t),\delta
\sigma(t)\rangle I_{E_{\varepsilon}}(t)+\langle\sigma_{x}^{T}%
(t)p(t)+q(t),x_{1}(t)\rangle,\label{neweq12}%
\end{equation}
it is easy to check that $(y_{1},z_{1})$ satisfies the following BSDE:%
\begin{equation}
\left\{
\begin{array}
[c]{rl}%
-dy_{1}(t)= & \{\langle f_{x}(t),x_{1}(t)\rangle+f_{y}(t)y_{1}(t)+f_{z}%
(t)z_{1}(t)\\
& -[f_{z}(t)\langle p(t),\delta\sigma(t)\rangle+\langle q(t),\delta
\sigma(t)\rangle]I_{E_{\varepsilon}}(t)\}dt-z_{1}(t)dW(t),\\
y_{1}(T)= & \langle\phi_{x}(\bar{x}(T)),x_{1}(T)\rangle.
\end{array}
\right.  \label{neweq13}%
\end{equation}
Set%
\begin{equation}%
\begin{array}
[c]{rl}%
y_{2}(t)= & \langle p(t),x_{2}(t)\rangle+\frac{1}{2}\langle P(t)x_{1}%
(t),x_{1}(t)\rangle+\hat{y}(t),\\
z_{2}(t)= & \langle\sigma_{x}^{T}(t)p(t)+q(t),x_{2}(t)\rangle+\langle
\delta\sigma_{x}^{T}(t)p(t)+\frac{1}{2}P(t)\delta\sigma(t)+\frac{1}{2}%
P^{T}(t)\delta\sigma(t),x_{1}(t)\rangle I_{E_{\varepsilon}}(t)\\
& +\frac{1}{2}\langle\lbrack\sigma_{xx}^{T}(t)p(t)+P(t)\sigma_{x}%
(t)+\sigma_{x}^{T}(t)P(t)+Q(t)]x_{1}(t),x_{1}(t)\rangle+\hat{z}(t),
\end{array}
\label{neweq14}%
\end{equation}
it is easy to verify that%
\begin{equation}
\left\{
\begin{array}
[c]{rl}%
-dy_{2}(t)= & \{\langle f_{x}(t),x_{2}(t)\rangle+f_{y}(t)y_{2}(t)+f_{z}%
(t)z_{2}(t)\\
& +\frac{1}{2}[(x_{1}(t))^{T},y_{1}(t),z_{1}(t)]D^{2}f(t)[(x_{1}(t))^{T}%
,y_{1}(t),z_{1}(t)]^{T}\\
& +[\langle q(t),\delta\sigma(t)\rangle-\frac{1}{2}f_{zz}(t)(\langle
p(t),\delta\sigma(t)\rangle)^{2}+f(t,\bar{x}(t),\bar{y}(t),\bar{z}(t)+\langle
p(t),\delta\sigma(t)\rangle,u)\\
& -f(t,\bar{x}(t),\bar{y}(t),\bar{z}(t),\bar{u}(t))]I_{E_{\varepsilon}%
}(t)+\langle L(t),x_{1}(t)\rangle I_{E_{\varepsilon}}(t)\}dt-z_{2}(t)dW(t),\\
y_{2}(T)= & \langle\phi_{x}(\bar{x}(T)),x_{2}(T)\rangle+\frac{1}{2}\langle
\phi_{xx}(\bar{x}(T))x_{1}(T),x_{1}(T)\rangle,
\end{array}
\right.  \label{neweq15}%
\end{equation}
where $\langle L(t),x_{1}(t)\rangle I_{E_{\varepsilon}}(t)=o(\varepsilon)$, so
we do not give the explicit formula for $L(t)$. Here we use
\[
\lbrack f(t,\bar{x}(t),\bar{y}(t),\bar{z}(t)+\langle p(t),\delta
\sigma(t)\rangle,u)-f(t,\bar{x}(t),\bar{y}(t),\bar{z}(t),\bar{u}%
(t))]I_{E_{\varepsilon}}(t)
\]
to completely deal with the term $\langle p(t),\delta\sigma(t)\rangle
I_{E_{\varepsilon}}(t)$ in the variation of $z$, so the terms $f_{z}(t)\langle
p(t),\delta\sigma(t)\rangle$ and $\frac{1}{2}f_{zz}(t)(\langle p(t),\delta
\sigma(t)\rangle)^{2}$ are repeated in $f_{z}(t)z_{1}(t)$ and $\frac{1}%
{2}f_{zz}(t)(z_{1}(t))^{2}$. Noting equations (\ref{neweq12}) and
(\ref{neweq14}), then the adjoint equations for $(z_{1}(t))^{2}$ and other
terms are essentially for $x_{1}(t)$, $x_{2}(t)$ and $x_{1}(t)(x_{1}(t))^{T}$,
which is solved in \cite{P90}. In order to further explain the difference of
expansions for SDE and BSDE, we consider the following equations:%
\begin{equation}
\left\{
\begin{array}
[c]{rl}%
-d\tilde{y}_{1}(t)= & \{\langle f_{x}(t),x_{1}(t)\rangle+f_{y}(t)\tilde{y}%
_{1}(t)+f_{z}(t)\tilde{z}_{1}(t)\}dt-\tilde{z}_{1}(t)dW(t),\\
\tilde{y}_{1}(T)= & \langle\phi_{x}(\bar{x}(T)),x_{1}(T)\rangle,
\end{array}
\right.  \label{neweq16}%
\end{equation}%
\begin{equation}
\left\{
\begin{array}
[c]{rl}%
-d\tilde{y}_{2}(t)= & \{\langle f_{x}(t),x_{2}(t)\rangle+f_{y}(t)\tilde{y}%
_{2}(t)+f_{z}(t)\tilde{z}_{2}(t)\\
& +\frac{1}{2}[(x_{1}(t))^{T},\tilde{y}_{1}(t),\tilde{z}_{1}(t)]D^{2}%
f(t)[(x_{1}(t))^{T},\tilde{y}_{1}(t),\tilde{z}_{1}(t)]^{T}\\
& +[f(t,\bar{x}(t),\bar{y}(t),\bar{z}(t)+\langle p(t),\delta\sigma
(t)\rangle,u)-f(t,\bar{x}(t),\bar{y}(t),\bar{z}(t),\bar{u}(t))\\
& -f_{z}(t)\langle p(t),\delta\sigma(t)\rangle-\frac{1}{2}f_{zz}(t)(\langle
p(t),\delta\sigma(t)\rangle)^{2}]I_{E_{\varepsilon}}(t)\}dt-\tilde{z}%
_{2}(t)dW(t),\\
\tilde{y}_{2}(T)= & \langle\phi_{x}(\bar{x}(T)),x_{2}(T)\rangle+\frac{1}%
{2}\langle\phi_{xx}(\bar{x}(T))x_{1}(T),x_{1}(T)\rangle.
\end{array}
\right.  \label{neweq17}%
\end{equation}
By the standard estimates of BSDEs, it is easy to show that
\[
E[\sup_{t\in\lbrack0,T]}|y_{1}(t)+y_{2}(t)-\tilde{y}_{1}(t)-\tilde{y}%
_{2}(t)|^{2}+\int_{0}^{T}|z_{1}(t)+z_{2}(t)-\tilde{z}_{1}(t)-\tilde{z}%
_{2}(t)|^{2}dt]=o(\varepsilon^{2}).
\]
Thus by equation (\ref{veq123}), we can get
\[%
\begin{array}
[c]{l}%
y^{\varepsilon}(t)=\bar{y}(t)+\tilde{y}_{1}(t)+\tilde{y}_{2}(t)+o(\varepsilon
),\\
z^{\varepsilon}(t)=\bar{z}(t)+\tilde{z}_{1}(t)+\tilde{z}_{2}(t)+o(\varepsilon
).
\end{array}
\]
The main difference is equation (\ref{neweq17}) which is due to the term
$\langle p(t),\delta\sigma(t)\rangle I_{E_{\varepsilon}}(t)$ in the variation
of $z$. If $f$ is independent of $z$, the variational equations for $(y,z)$
are the same as in \cite{P90}, which is pointed in \cite{P98}.
\end{remark}

Now we consider the maximum principle. From equation (\ref{veq123}), we get%
\[
J(u^{\varepsilon}(\cdot))-J(\bar{u}(\cdot))=y^{\varepsilon}(0)-\bar{y}%
(0)=\hat{y}(0)+o(\varepsilon).
\]
Define the following adjoint equation for BSDE (\ref{vbsde1}):%
\[
\left\{
\begin{array}
[c]{l}%
d\gamma(t)=f_{y}(t)\gamma(t)dt+f_{z}(t)\gamma(t)dW(t),\\
\gamma(0)=1.
\end{array}
\right.
\]
Applying It\^{o}'s formula to $\gamma(t)\hat{y}(t)$, we can obtain%
\begin{equation}%
\begin{array}
[c]{rl}%
\hat{y}(0)= & E[\int_{0}^{T}\gamma(s)[\langle p(s),\delta b(s)\rangle+\langle
q(s),\delta\sigma(s)\rangle+\frac{1}{2}\langle P(s)\delta\sigma(s),\delta
\sigma(s)\rangle\\
& +f(s,\bar{x}(s),\bar{y}(s),\bar{z}(s)+\langle p(s),\delta\sigma
(s)\rangle,u)-f(s,\bar{x}(s),\bar{y}(s),\bar{z}(s),\bar{u}%
(s))]I_{E_{\varepsilon}}(s)ds].
\end{array}
\label{mpeq1}%
\end{equation}
Note that $\gamma(s)>0$, then we define the following function:%
\begin{equation}%
\begin{array}
[c]{rl}%
\mathcal{H}(t,x,y,z,u,p,q,P)= & \langle p,b(t,x,u)\rangle+\langle
q,\sigma(t,x,u)\rangle\\
& +\frac{1}{2}\langle P(\sigma(t,x,u)-\sigma(t,\bar{x},\bar{u})),\sigma
(t,x,u)-\sigma(t,\bar{x},\bar{u})\rangle\\
& +f(t,x,y,z+\langle p,\sigma(t,x,u)-\sigma(t,\bar{x},\bar{u})\rangle,u),
\end{array}
\label{ham123}%
\end{equation}
where $(p,q,P)$ is defined in equations (\ref{adjoint1}) and (\ref{adjoint2}).
Thus we obtain the following maximum principle.

\begin{theorem}
\label{newth4}Suppose (A1) and (A2) hold. Let $\bar{u}(\cdot)$ be an optimal
control and $(\bar{x}(\cdot),\bar{y}(\cdot),\bar{z}(\cdot))$ be the
corresponding solution. Then%
\begin{equation}
\mathcal{H}(t,\bar{x}(t),\bar{y}(t),\bar{z}(t),u,p(t),q(t),P(t))\geq
\mathcal{H}(t,\bar{x}(t),\bar{y}(t),\bar{z}(t),\bar{u}%
(t),p(t),q(t),P(t)),\forall u\in U,\text{a.e., a.s.,} \label{newmp123}%
\end{equation}
where $\mathcal{H}(\cdot)$ is defined in (\ref{ham123}).
\end{theorem}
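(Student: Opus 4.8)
The plan is to recognise the bracketed integrand of (\ref{mpeq1}) as exactly the Hamiltonian increment
\[
\mathbb{H}(s):=\mathcal{H}(s,\bar{x}(s),\bar{y}(s),\bar{z}(s),u,p(s),q(s),P(s))-\mathcal{H}(s,\bar{x}(s),\bar{y}(s),\bar{z}(s),\bar{u}(s),p(s),q(s),P(s)),
\]
and then to extract the pointwise inequality from optimality together with a Lebesgue--differentiation argument. For the identification I would substitute $x=\bar{x}(s)$, $y=\bar{y}(s)$, $z=\bar{z}(s)$ into (\ref{ham123}). Using the notation in (\ref{nota1}), at a generic $u\in U$ one has $b(s,\bar{x}(s),u)=b(s)+\delta b(s)$, $\sigma(s,\bar{x}(s),u)=\sigma(s)+\delta\sigma(s)$ and $\sigma(s,\bar{x}(s),u)-\sigma(s,\bar{x}(s),\bar{u}(s))=\delta\sigma(s)$, whereas at $u=\bar{u}(s)$ all $\delta$-terms vanish and $\mathcal{H}$ reduces to $\langle p(s),b(s)\rangle+\langle q(s),\sigma(s)\rangle+f(s)$. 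Subtracting, $\mathbb{H}(s)=\langle p(s),\delta b(s)\rangle+\langle q(s),\delta\sigma(s)\rangle+\tfrac12\langle P(s)\delta\sigma(s),\delta\sigma(s)\rangle+f(s,\bar{x}(s),\bar{y}(s),\bar{z}(s)+\langle p(s),\delta\sigma(s)\rangle,u)-f(s)$, which is precisely the bracket in (\ref{mpeq1}); hence $\hat{y}(0)=E[\int_0^T\gamma(s)\mathbb{H}(s)I_{E_\varepsilon}(s)\,ds]$.

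Next I would invoke optimality. Since $\bar{u}$ minimises $J$, $0\le J(u^\varepsilon(\cdot))-J(\bar{u}(\cdot))=\hat{y}(0)+o(\varepsilon)$, so $E[\int_0^T\gamma(s)\mathbb{H}(s)I_{E_\varepsilon}(s)\,ds]\ge -o(\varepsilon)$. To obtain not merely an averaged but a genuinely pointwise (in $\omega$) statement, I would run the spike variation on $E_\varepsilon=[t,t+\varepsilon]$ but restrict the perturbation to an arbitrary event $A\in\mathcal{F}_t$, i.e. replace $u^\varepsilon$ by $\bar{u}(s)+(u-\bar{u}(s))I_AI_{E_\varepsilon}(s)$. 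This control is still admissible (on $[t,t+\varepsilon]$ the factor $I_A$ is $\mathcal{F}_t\subset\mathcal{F}_s$-measurable and bounded), and the entire expansion of Theorems \ref{th1} and \ref{th3} goes through verbatim with an overall factor $I_A$ inserted in every $I_{E_\varepsilon}$-term, so that $E[I_A\int_t^{t+\varepsilon}\gamma(s)\mathbb{H}(s)\,ds]\ge -o(\varepsilon)$ for every $A\in\mathcal{F}_t$.

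Finally I would divide by $\varepsilon$ and let $\varepsilon\downarrow0$. Viewing $s\mapsto\gamma(s)\mathbb{H}(s)$ as an element of $L^1([0,T];L^1(\Omega))$ (integrability follows from (A1)--(A2), the admissibility bounds, and the standard estimates for $(p,q)$ and $(P,Q)$), the Lebesgue differentiation theorem gives $\tfrac1\varepsilon\int_t^{t+\varepsilon}\gamma(s)\mathbb{H}(s)\,ds\to\gamma(t)\mathbb{H}(t)$ in $L^1(\Omega)$ for a.e.\ $t$, with a time-null-set that does not depend on $A$. Pairing with $I_A$ preserves the inequality, whence $E[I_A\gamma(t)\mathbb{H}(t)]\ge0$ for every $A\in\mathcal{F}_t$; since $\gamma(t)\mathbb{H}(t)$ is $\mathcal{F}_t$-measurable this forces $\gamma(t)\mathbb{H}(t)\ge0$ a.s., and $\gamma(t)>0$ then yields $\mathbb{H}(t)\ge0$ a.s., for a.e.\ $t$ and each fixed $u$. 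Because $U\subset\mathbb{R}^k$ is separable and $\mathcal{H}$ is continuous in $u$ by (A1)--(A2), passing to a countable dense subset of $U$ and taking the (still null) union of exceptional sets upgrades this to the full quantifier ``$\forall u\in U$'', which is exactly (\ref{newmp123}).

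The algebraic identification in the first step is harmless; the genuine obstacle is the last stage -- turning the single scalar inequality $\hat{y}(0)\ge -o(\varepsilon)$ into a pointwise-in-$(t,\omega)$ statement valid for all $u$. Two points require care: (i) that inserting $I_A$ does not spoil any of the $o(\varepsilon)$ and $O(\varepsilon)$ estimates behind $\hat{y}(0)=E[\int_0^T\gamma(s)\mathbb{H}(s)I_{E_\varepsilon}(s)\,ds]$; and (ii) that the differentiation is taken in the vector-valued ($L^1(\Omega)$) sense, so that the exceptional time-null-set is independent of the test event $A$ and one may quantify over all $A\in\mathcal{F}_t$ simultaneously before reading off the a.s.\ inequality.
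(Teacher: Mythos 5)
Your proposal is correct and follows the paper's own route: the identification of the bracket in (\ref{mpeq1}) with the increment of $\mathcal{H}$ from (\ref{ham123}), the use of optimality to get $\hat{y}(0)+o(\varepsilon)\geq 0$, and the positivity of $\gamma$ are precisely how the paper passes from (\ref{mpeq1}) to Theorem \ref{newth4}. The final localization step, which the paper leaves implicit as standard, is carried out correctly in your version: the spike value $uI_A+\bar{u}(s)I_{A^c}$ is itself an admissible control, so the expansions of Theorems \ref{th1} and \ref{th3} and hence (\ref{mpeq1}) hold with the extra factor $I_A$ (the estimates only improve, since every $\delta$-term is dominated by its constant-spike counterpart), and the Bochner--Lebesgue differentiation in $L^{1}(\Omega)$ gives an exceptional time-null-set independent of $A$, which legitimately yields the pointwise a.e., a.s. inequality for every $u$ in a countable dense subset of $U$ and then, by continuity in $u$, for all $u\in U$.
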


If the control domain $U$ is convex, we can get the following corollary which
is obtained by Peng in \cite{P93}.

\begin{corollary}
Let the assumptions as in Theorem \ref{newth4}. If $U$ is convex and $b$,
$\sigma$, $f$ are continuously differentiable with respect to $u$, then%
\[
\langle b_{u}^{T}(t)p(t)+\sigma_{u}^{T}(t)q(t)+f_{z}(t)\sigma_{u}%
^{T}(t)p(t)+f_{u}(t),u-\bar{u}(t)\rangle\geq0,\text{ }\forall u\in
U,a.e.,a.s..
\]

\end{corollary}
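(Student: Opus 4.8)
The plan is to read the stated inequality as the first-order necessary condition for the minimization already established in Theorem \ref{newth4}. Fix a time $t$ outside a Lebesgue-null set and a sample point outside a $P$-null set on which the global inequality (\ref{newmp123}) holds. Then the scalar-free map
\[
u\longmapsto\mathcal{H}(t,\bar{x}(t),\bar{y}(t),\bar{z}(t),u,p(t),q(t),P(t))
\]
attains its minimum over $U$ at $u=\bar{u}(t)$. Since $U$ is convex, for every $u\in U$ and $\theta\in[0,1]$ the point $\bar{u}(t)+\theta(u-\bar{u}(t))$ lies in $U$, so the function $\varphi(\theta):=\mathcal{H}(t,\bar{x}(t),\bar{y}(t),\bar{z}(t),\bar{u}(t)+\theta(u-\bar{u}(t)),p(t),q(t),P(t))$ has a minimum at $\theta=0$ on $[0,1]$. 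Because $b$, $\sigma$, $f$ are $C^1$ in $u$, $\varphi$ is differentiable and minimality forces $\varphi'(0^+)\geq0$.

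The core of the argument is the chain-rule evaluation $\varphi'(0^+)=\langle\nabla_u\mathcal{H},u-\bar{u}(t)\rangle$ at $u=\bar{u}(t)$, computed term by term from the four summands of (\ref{ham123}). The two linear terms $\langle p,b\rangle$ and $\langle q,\sigma\rangle$ contribute $b_u^T(t)p(t)$ and $\sigma_u^T(t)q(t)$. The decisive structural observation is that at $u=\bar{u}(t)$ the increment $\sigma(t,\bar{x}(t),u)-\sigma(t,\bar{x}(t),\bar{u}(t))$ vanishes: every differentiated copy of the quadratic term $\frac{1}{2}\langle P(\sigma-\sigma),\sigma-\sigma\rangle$ retains one undifferentiated factor equal to $\sigma(t,\bar{x}(t),\bar{u}(t))-\sigma(t,\bar{x}(t),\bar{u}(t))=0$, so this term has zero $u$-gradient and drops out entirely; moreover the $z$-slot $\bar{z}(t)+\langle p(t),\sigma(t,\bar{x}(t),u)-\sigma(t,\bar{x}(t),\bar{u}(t))\rangle$ of $f$ collapses to $\bar{z}(t)$. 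Differentiating $f$ through this $z$-slot and through the explicit $u$-dependence then yields $f_z(t)\langle p(t),\sigma_{u}(t)\rangle+f_u(t)=f_z(t)\sigma_u^T(t)p(t)+f_u(t)$. Summing gives precisely $b_u^T(t)p(t)+\sigma_u^T(t)q(t)+f_z(t)\sigma_u^T(t)p(t)+f_u(t)$, and $\varphi'(0^+)\geq0$ is the asserted inequality.

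It remains to promote this pointwise statement to one valid for all $u\in U$ on a single full-measure set, so that the "a.e., a.s." quantifier is legitimate. I would proceed as in the classical maximum principle: choose a countable dense subset $\{u_k\}\subset U$ (possible since $U\subset\mathbb{R}^k$), run the above argument for each fixed $u_k$ to obtain the inequality off a null set $N_k$, and intersect over the countable family $\bigcup_k N_k$. On the complement, the inequality holds for every $u_k$, and since the gradient expression is continuous in $u$, density extends it to all $u\in U$.

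The only genuine computation is the differentiation of the $f$-term through its $z$-argument; everything else is bookkeeping. The single point requiring care, rather than difficulty, is the vanishing of the $P$-quadratic term at $\bar{u}(t)$ — this is exactly what makes the formula collapse to the four stated terms and thereby recovers the local maximum principle of \cite{P93}.
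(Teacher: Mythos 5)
Your proposal is correct and is exactly the argument the paper intends: the paper states this corollary with no proof at all (only the remark that it recovers Peng \cite{P93}), leaving it as an immediate first-order consequence of Theorem \ref{newth4}, and your derivation supplies precisely that. Your key computation is right and is the whole content: at $u=\bar{u}(t)$ the increment $\sigma(t,\bar{x}(t),u)-\sigma(t,\bar{x}(t),\bar{u}(t))$ vanishes, so the quadratic $P$-term in (\ref{ham123}) has zero $u$-gradient and the $z$-slot of $f$ collapses to $\bar{z}(t)$, leaving exactly $b_{u}^{T}(t)p(t)+\sigma_{u}^{T}(t)q(t)+f_{z}(t)\sigma_{u}^{T}(t)p(t)+f_{u}(t)$. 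One small reordering in your null-set bookkeeping: to conclude that $\varphi$ has a minimum at $\theta=0$ you need inequality (\ref{newmp123}) at every segment point $\bar{u}(t)+\theta(u_{k}-\bar{u}(t))$, $\theta\in(0,1]$, not merely at $u_{k}$; so the countable-dense-set and continuity argument should be applied to the global inequality \emph{before} differentiating (using that $\mathcal{H}$ is continuous in $u$ under (A1)--(A2), one gets (\ref{newmp123}) for all $u\in U$ simultaneously off a single null set), after which the one-sided derivative argument goes through verbatim.
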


Now we give an example to compare our result with the result in \cite{W, Y1}.

\begin{example}
\label{newexa1}Suppose $n=d=k=1$. $U$ is a given subset in $\mathbb{R}$.
Consider the following control system:%
\[
dx(t)=u(t)dW(t),\text{ }x(0)=0,
\]%
\[
y(t)=x(T)+\int_{t}^{T}f(z(s))ds-\int_{t}^{T}z(s)dW(s).
\]
In this case, our maximum principle is
\begin{equation}
f(\bar{z}(t)+u-\bar{u}(t))-f(\bar{z}(t))\geq0,\text{ }\forall u\in
U,a.e.,a.s.. \label{exa1}%
\end{equation}
Note that%
\[
y(t)-\int_{0}^{t}u(s)dW(s)=\int_{t}^{T}f(z(s)-u(s)+u(s))ds-\int_{t}%
^{T}(z(s)-u(s))dW(s),
\]
then by comparison theorem of BSDE, it is easy to check that inequality
(\ref{exa1}) is a sufficient condition. For the case $U=\{0,1\}$, $f(0)=0$,
$f^{\prime}(0)<0$, $f(1)>0$, $f(-1)<0$, it is easy to verify that $(\bar
{x},\bar{y},\bar{z},\bar{u})=(0,0,0,0)$ satisfies (\ref{exa1}), thus $\bar
{u}=0$ is an optimal control. But $f_{z}(\bar{z}(t))(1-\bar{u}(t))<0$, which
implies that $\bar{u}=0$ is not an optimal control for the case $U=[0,1]$. The
maximum principle in \cite{Y1} is $f_{z}(\bar{z}(t))(u-\bar{u}(t))\geq0$,
$\forall u\in U$, $a.e.$, $a.s.$, which only cover the case $U$ is convex. The
maximum principle in \cite{W} contains two unknown parameters.
\end{example}

\begin{remark}
In \cite{W, Y1}, the authors consider the control system which consists of SDE
(\ref{state1}) and the following state equation:%
\begin{equation}
y(t)=y_{0}-\int_{0}^{t}f(s,x(s),y(s),v(s),u(s))ds+\int_{0}^{t}v(s)dW(s),
\label{state3}%
\end{equation}
where the set of all admissible controls%
\[
\mathcal{\tilde{U}}[0,T]=\{(u,y_{0},v)\in\mathcal{U}[0,T]\times\mathbb{R}%
\times M^{2}(0,T):y(T)=\phi(x(T))\}.
\]
The optimal control problem is to minimize $J(u(\cdot),y_{0},v(\cdot))=y_{0}$
over $\mathcal{\tilde{U}}[0,T]$. Obviously, this problem is equivalent to
Peng's problem. Thus our maximum principle also completely solves this control problem.
\end{remark}

\subsection{Multi-dimensional case}

In this sebsection, we extend Peng's problem to multi-dimensional case, i.e.,
the functions in BSDE (\ref{state2}) are $m$-dimensional, $\phi:\mathbb{R}%
^{n}\rightarrow\mathbb{R}^{m}$, $f:[0,T]\times\mathbb{R}^{n}\times
\mathbb{R}^{m}\times\mathbb{R}^{m\times d}\times\mathbb{R}^{k}\rightarrow
\mathbb{R}^{m}$. The cost functional is defined by%
\begin{equation}
J(u(\cdot))=h(y(0)),\label{newcost3}%
\end{equation}
where $h:\mathbb{R}^{m}\rightarrow\mathbb{R}$. For deriving the variational
equation for BSDE (\ref{state2}), we use the following notation.%
\begin{equation}%
\begin{array}
[c]{l}%
W(t)=(W^{1}(t),\ldots,W^{d}(t))^{T},\text{ }\phi(x)=(\phi^{1}(x),\ldots
,\phi^{m}(x))^{T},\\
\sigma(t,x,u)=(\sigma^{ij}(t,x,u)),\text{ }i=1,\ldots,n,j=1,\ldots,d,\\
\sigma^{j}(t,x,u)=(\sigma^{1j}(t,x,u),\ldots,\sigma^{nj}(t,x,u))^{T}%
,j=1,\ldots,d,\\
f(t,x,y,z,u)=(f^{1}(t,x,y,z,u),\ldots,f^{m}(t,x,y,z,u))^{T},\\
y(t)=(y^{1}(t),\ldots,y^{m}(t))^{T},z(t)=(z^{ij}(t)),i\leq m,j\leq d,\\
z^{j}(t)=(z^{1j}(t),\ldots,z^{mj}(t))^{T},j=1,\ldots,d.
\end{array}
\label{nota3}%
\end{equation}
We introduce the following adjoint equations: for $i=1,\ldots,m$,%
\begin{equation}
\left\{
\begin{array}
[c]{rl}%
-dp_{i}(t)= & F_{i}(t)dt-\sum_{j=1}^{d}q_{i}^{j}(t)dW^{j}(t),\\
p_{i}(T)= & \phi_{x}^{i}(\bar{x}(T)),
\end{array}
\right.  \label{adjoint6}%
\end{equation}%
\begin{equation}
\left\{
\begin{array}
[c]{rl}%
-dP_{i}(t)= & G_{i}(t)dt-\sum_{j=1}^{d}Q_{i}^{j}(t)dW^{j}(t),\\
P_{i}(T)= & \phi_{xx}^{i}(\bar{x}(T)),
\end{array}
\right.  \label{adjoint7}%
\end{equation}
where $F_{i}(t)$ and $G_{i}(t)$ is given after the following notations:%
\begin{equation}%
\begin{array}
[c]{l}%
p(t)=[p_{1}(t),\ldots,p_{m}(t)]_{n\times m}\text{, }q^{j}(t)=[q_{1}%
^{j}(t),\ldots,q_{m}^{j}(t)]_{n\times m},\\
p_{l}(t)=(p_{l}^{1}(t),\ldots,p_{l}^{n}(t))^{T},\text{ }q_{l}^{j}%
(t)=(q_{l}^{1j}(t),\ldots,q_{l}^{nj}(t))^{T},\text{ }\\
b_{xx}^{T}(t)p_{l}(t)=\sum_{i=1}^{n}p_{l}^{i}(t)(b_{xx}^{i}(t))^{T},\text{
}(\sigma_{xx}^{j}(t))^{T}p_{l}(t)=\sum_{i=1}^{n}p_{l}^{i}(t)(\sigma_{xx}%
^{ij}(t))^{T},\\
(\sigma_{xx}^{j}(t))^{T}q_{l}^{j}(t)=\sum_{i=1}^{n}q_{l}^{ij}(t)(\sigma
_{xx}^{ij}(t))^{T},l=1,\ldots,m,\text{ }j=1,\ldots,d.
\end{array}
\label{nota4}%
\end{equation}%
\begin{equation}%
\begin{array}
[c]{rl}%
F_{i}(t)= & b_{x}^{T}(t)p_{i}(t)+f_{x}^{i}(t)+\sum_{l=1}^{m}f_{y^{l}}%
^{i}(t)p_{l}(t)+\sum_{j=1}^{d}(\sigma_{x}^{j}(t))^{T}q_{i}^{j}(t)\\
& +\sum_{j=1}^{d}\sum_{l=1}^{m}f_{z^{lj}}^{i}(t)[(\sigma_{x}^{j}(t))^{T}%
p_{l}(t)+q_{l}^{j}(t)],\\
G_{i}(t)= & P_{i}(t)b_{x}(t)+(b_{x}(t))^{T}P_{i}(t)+\sum_{l=1}^{m}f_{y^{l}%
}^{i}(t)P_{l}(t)+\sum_{j=1}^{d}[Q_{i}^{j}(t)\sigma_{x}^{j}(t)+(\sigma_{x}%
^{j}(t))^{T}Q_{i}^{j}(t)\\
& +(\sigma_{xx}^{j}(t))^{T}q_{i}^{j}(t)+(\sigma_{x}^{j}(t))^{T}P_{i}%
(t)\sigma_{x}^{j}(t)]+\sum_{j=1}^{d}\sum_{l=1}^{m}[f_{z^{lj}}^{i}%
(t)P_{l}(t)\sigma_{x}^{j}(t)+f_{z^{lj}}^{i}(t)(\sigma_{x}^{j}(t))^{T}%
P_{l}(t)\\
& +f_{z^{lj}}^{i}(t)Q_{l}^{j}(t)+f_{z^{lj}}^{i}(t)(\sigma_{xx}^{j}%
(t))^{T}p_{l}(t)]+b_{xx}^{T}(t)p_{i}(t)+[I_{n\times n},p(t),(\sigma_{x}%
^{1}(t))^{T}p(t)+q^{1}(t),\ldots,\\
& (\sigma_{x}^{d}(t))^{T}p(t)+q^{d}(t)]D^{2}f^{i}(t)[I_{n\times n}%
,p(t),(\sigma_{x}^{1}(t))^{T}p(t)+q^{1}(t),\ldots,(\sigma_{x}^{d}%
(t))^{T}p(t)+q^{d}(t)]^{T},
\end{array}
\label{fun1}%
\end{equation}
where $D^{2}f^{i}$ is the Hessian matrix of $f^{i}$ with respect to
$(x,y,z^{1},\ldots,z^{d})$. Let $\hat{y}(t)=(\hat{y}^{1}(t),\ldots,\hat{y}%
^{m}(t))^{T}$, $\hat{z}(t)=(\hat{z}^{ij}(t))$ be the solution of the following
BSDE:%
\begin{equation}%
\begin{array}
[c]{rl}%
\hat{y}(t)= & \int_{t}^{T}[f_{y}(s)\hat{y}(s)+\sum_{j=1}^{d}f_{z^{j}}%
(s)\hat{z}^{j}(s)+\{p^{T}(s)\delta b(s)+\sum_{j=1}^{d}[(q^{j}(s))^{T}%
\delta\sigma^{j}(s)+\frac{1}{2}P^{T}(s)\delta\sigma^{j}(s)\delta\sigma
^{j}(s)]\\
& +f(s,\bar{x}(s),\bar{y}(s),\bar{z}(s)+p^{T}(s)\delta\sigma(s),u)-f(s,\bar
{x}(s),\bar{y}(s),\bar{z}(s),\bar{u}(s))\}I_{E_{\varepsilon}}(s)]ds\\
& -\sum_{j=1}^{d}\int_{t}^{T}\hat{z}^{j}(s)dW^{j}(s),
\end{array}
\label{newvb12}%
\end{equation}
where%
\[
P(t)=[P_{1}(t),\ldots,P_{m}(t)],\text{ }P^{T}(s)\delta\sigma^{j}%
(s)\delta\sigma^{j}(s)=(\langle P_{1}(s)\delta\sigma^{j}(s),\delta\sigma
^{j}(s)\rangle,\ldots,\langle P_{m}(s)\delta\sigma^{j}(s),\delta\sigma
^{j}(s)\rangle)^{T}.
\]
Similar to the analysis in Theorem \ref{th3}, we can get the following
variational principle:%
\begin{equation}%
\begin{array}
[c]{rl}%
y^{i;\varepsilon}(t)= & \bar{y}^{i}(t)+\langle p_{i}(t),x_{1}(t)+x_{2}%
(t)\rangle+\frac{1}{2}\langle P_{i}(t)x_{1}(t),x_{1}(t)\rangle+\hat{y}%
^{i}(t)+o(\varepsilon),\\
z^{ij;\varepsilon}(t)= & \bar{z}^{ij}(t)+\langle p_{i}(t),\delta\sigma
^{j}(t)\rangle I_{E_{\varepsilon}}(t)+\langle(\sigma_{x}^{j}(t))^{T}%
p_{i}(t)+q_{i}^{j}(t),x_{1}(t)+x_{2}(t)\rangle\\
& +\langle(\delta\sigma_{x}^{j}(t))^{T}p_{i}(t)+\frac{1}{2}P_{i}%
(t)\delta\sigma^{j}(t)+\frac{1}{2}P_{i}^{T}(t)\delta\sigma^{j}(t),x_{1}%
(t)\rangle I_{E_{\varepsilon}}(t)\\
& +\frac{1}{2}\langle\lbrack(\sigma_{xx}^{j}(t))^{T}p_{i}(t)+P_{i}%
(t)\sigma_{x}^{j}(t)+(\sigma_{x}^{j}(t))^{T}P_{i}(t)+Q_{i}^{j}(t)]x_{1}%
(t),x_{1}(t)\rangle\\
& +\hat{z}^{ij}(t)+o(\varepsilon),\text{ }i=1,\ldots,m,\text{ }j=1,\ldots,d.
\end{array}
\label{multivari}%
\end{equation}
Let $h\in C^{1}(\mathbb{R}^{m})$. Then we get%
\[
J(u^{\varepsilon}(\cdot))-J(\bar{u}(\cdot))=\langle h_{y}(\bar{y}(0)),\hat
{y}(0)\rangle+o(\varepsilon).
\]
We introduce the following adjoint equation for BSDE (\ref{newvb12}).%
\begin{equation}
\left\{
\begin{array}
[c]{l}%
d\gamma(t)=f_{y}^{T}(t)\gamma(t)dt+\sum_{j=1}^{d}f_{z^{j}}^{T}(t)\gamma
(t)dW^{j}(t),\\
\gamma(0)=h_{y}(\bar{y}(0)).
\end{array}
\right.  \label{adjoint8}%
\end{equation}
Applying It\^{o}'s formula to $\langle\gamma(t),\hat{y}(t)\rangle$, we can get
the following maximum principle.

\begin{theorem}
\label{th5}Suppose (A1) and (A2) hold. Let $\bar{u}(\cdot)$ be an optimal
control and $(\bar{x}(\cdot),\bar{y}(\cdot),\bar{z}(\cdot))$ be the
corresponding solution. The cost function is defined in (\ref{newcost3}) and
$h\in C^{1}(\mathbb{R}^{m})$. Then%
\begin{equation}%
\begin{array}
[c]{l}%
\langle\gamma(t),p^{T}(t)\delta b(t)+\sum_{j=1}^{d}[(q^{j}(t))^{T}\delta
\sigma^{j}(t)+\frac{1}{2}P^{T}(t)\delta\sigma^{j}(t)\delta\sigma^{j}(t)]\\
+f(t,\bar{x}(t),\bar{y}(t),\bar{z}(t)+p^{T}(t)\delta\sigma(t),u)-f(s,\bar
{x}(s),\bar{y}(s),\bar{z}(s),\bar{u}(s))\rangle\\
\geq0,\text{ \ }\forall u\in U,\text{a.e., a.s.,}%
\end{array}
\label{multimp}%
\end{equation}
where $p$, $q^{j}$, $P$, $\gamma$ are given in equations (\ref{adjoint6}),
(\ref{adjoint7}), (\ref{fun1}) and (\ref{adjoint8}).
\end{theorem}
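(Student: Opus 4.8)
The plan is to mirror the duality argument used for the one-dimensional Theorem \ref{newth4}, now carrying the vector-valued adjoint process $\gamma$ of (\ref{adjoint8}) in place of the scalar one. Two ingredients are needed. First, the optimality of $\bar u$ gives $J(u^{\varepsilon}(\cdot))-J(\bar u(\cdot))\geq 0$, while the variational principle (\ref{multivari}) together with $h\in C^{1}$ yields the first-order expansion $J(u^{\varepsilon}(\cdot))-J(\bar u(\cdot))=\langle h_{y}(\bar y(0)),\hat y(0)\rangle+o(\varepsilon)$. Second, I need a duality identity expressing $\langle h_{y}(\bar y(0)),\hat y(0)\rangle$ directly through the spike coefficient of (\ref{newvb12}); write that coefficient as $\Psi(t)$, so that the generator of (\ref{newvb12}) reads $f_{y}(t)\hat y(t)+\sum_{j}f_{z^{j}}(t)\hat z^{j}(t)+\Psi(t)I_{E_{\varepsilon}}(t)$ and the asserted inequality (\ref{multimp}) is precisely $\langle\gamma(t),\Psi(t)\rangle\geq 0$.

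For the duality identity I would apply It\^{o}'s formula to $\langle\gamma(t),\hat y(t)\rangle$, using (\ref{adjoint8}) for $d\gamma$ and (\ref{newvb12}) for $d\hat y$. The drift of $\langle d\gamma,\hat y\rangle$ contributes $\langle f_{y}^{T}\gamma,\hat y\rangle=\langle\gamma,f_{y}\hat y\rangle$; the drift of $\langle\gamma,d\hat y\rangle$ contributes $-\langle\gamma,f_{y}\hat y+\sum_{j}f_{z^{j}}\hat z^{j}\rangle-\langle\gamma,\Psi\rangle I_{E_{\varepsilon}}$; and the quadratic covariation contributes $\sum_{j}\langle f_{z^{j}}^{T}\gamma,\hat z^{j}\rangle\,dt=\sum_{j}\langle\gamma,f_{z^{j}}\hat z^{j}\rangle\,dt$. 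The first terms cancel in pairs and the covariation cancels the remaining $-\sum_{j}\langle\gamma,f_{z^{j}}\hat z^{j}\rangle$, so the total $dt$-coefficient collapses to $-\langle\gamma,\Psi\rangle I_{E_{\varepsilon}}$. This exact cancellation is the point of choosing the transposed generators $f_{y}^{T}$, $f_{z^{j}}^{T}$ in (\ref{adjoint8}); verifying it, and checking that the $dW^{j}$-integrals are true martingales (which follows from the higher-moment estimates behind Theorems \ref{th1} and \ref{th3}), is the main computational step. Integrating on $[0,T]$, taking expectations, and using $\hat y(T)=0$ and $\gamma(0)=h_{y}(\bar y(0))$ then gives
\[
\langle h_{y}(\bar y(0)),\hat y(0)\rangle=E\Big[\int_{0}^{T}\langle\gamma(t),\Psi(t)\rangle I_{E_{\varepsilon}}(t)\,dt\Big].
\]

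Combining the two ingredients yields $E[\int_{0}^{T}\langle\gamma(t),\Psi(t)\rangle I_{E_{\varepsilon}}(t)\,dt]\geq o(\varepsilon)$ for every spike variation. Taking $E_{\varepsilon}=[s,s+\varepsilon]$, dividing by $\varepsilon$, and letting $\varepsilon\to 0$, a standard localization argument (Lebesgue differentiation together with the freedom to take $u$ as an $\mathcal{F}_{t}$-measurable perturbation) upgrades this integral inequality to the pointwise statement $\langle\gamma(t),\Psi(t)\rangle\geq 0$ for a.e.\ $t$, a.s., which is exactly (\ref{multimp}). I expect the genuine obstacle to be twofold: confirming that the vector-valued It\^{o} computation produces the clean cancellation above (so that no stray $f_{y}\hat y$ or $f_{z^{j}}\hat z^{j}$ term survives), and ensuring the $o(\varepsilon)$ remainder from the expansion still vanishes after division by $\varepsilon$; both reduce to the $L^{\beta}$-estimates already established. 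Note that, unlike the scalar case where $\gamma>0$ permits dropping $\gamma$ to obtain the Hamiltonian form (\ref{newmp123}), here $\gamma$ is vector-valued and must be retained inside the inner product, which is why (\ref{multimp}) keeps the factor $\langle\gamma(t),\cdot\rangle$.
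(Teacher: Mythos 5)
Your proposal is correct and follows exactly the route the paper takes: the paper derives (\ref{multimp}) by combining the expansion $J(u^{\varepsilon}(\cdot))-J(\bar{u}(\cdot))=\langle h_{y}(\bar{y}(0)),\hat{y}(0)\rangle+o(\varepsilon)$ from (\ref{multivari}) with the duality identity obtained by applying It\^{o}'s formula to $\langle\gamma(t),\hat{y}(t)\rangle$, which is precisely your computation (the paper states this in one line, while you verify the cancellation produced by the transposed generators $f_{y}^{T}$, $f_{z^{j}}^{T}$ and carry out the localization explicitly). No discrepancy in substance; your write-up simply fills in the details the paper leaves implicit.
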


\section{Problem with state constraint}

For the simplicity of presentation, suppose $d=m=1$, the multi-dimensional
case can be treated with the same method.

We consider the control system: SDE (\ref{state1}) and BSDE (\ref{state2}).
The cost function $J(u(\cdot))$ is defined in (\ref{cost2}). In addition, we
consider the following state constraint:%
\begin{equation}
E[\varphi(x(T),y(0))]=0, \label{statecons}%
\end{equation}
where $\varphi:\mathbb{R}^{n}\times\mathbb{R}\rightarrow\mathbb{R}$. We need
the following assumption:

\begin{description}
\item[(A3)] $\varphi$ is twice continuously differentiable with respect to
$(x,y)$; $D^{2}\varphi$ is bounded; $D\varphi$ is bounded by $C(1+|x|+|y|)$.
\end{description}

Define all admissible controls as follows:%
\[
\mathcal{U}_{ad}[0,T]=\{u(\cdot)\in\mathcal{U}[0,T]:E[\varphi
(x(T),y(0))]=0\}.
\]
The control problem is to minimize $J(u(\cdot))$ over $\mathcal{U}_{ad}[0,T]$.

Let $\bar{u}(\cdot)\in\mathcal{U}_{ad}[0,T]$ be an optimal control and
$(\bar{x}(\cdot),\bar{y}(\cdot),\bar{z}(\cdot))$ be the corresponding solution
of equations (\ref{state1}) and (\ref{state2}). Similarly, we define
$(x(\cdot),y(\cdot),z(\cdot),u(\cdot))$ for any $u(\cdot)\in\mathcal{U}[0,T]$.
For any $\rho>0$, define the following cost functional on $\mathcal{U}[0,T]$:%
\begin{equation}
J_{\rho}(u(\cdot))=\{[(y(0)-\bar{y}(0))+\rho]^{2}+|E[\varphi(x(T),y(0))]|^{2}%
\}^{1/2}.\label{costcons}%
\end{equation}
It is easy to check that%
\[
\left\{
\begin{array}
[c]{l}%
J_{\rho}(u(\cdot))>0,\text{ }\forall u(\cdot)\in\mathcal{U}[0,T],\\
J_{\rho}(\bar{u}(\cdot))=\rho\leq\inf_{u\in\mathcal{U}[0,T]}J_{\rho}%
(u(\cdot))+\rho.
\end{array}
\right.
\]
In order to use well-known Ekeland's variational principle, we define the
following metric on $\mathcal{U}[0,T]$:%
\[
d(u(\cdot),v(\cdot))=E[\int_{0}^{T}I_{\{u\not =v\}}(t,\omega)dt].
\]
Suppose that $(\mathcal{U}[0,T],d)$ is a complete space and $J_{\rho}(\cdot)$
is continuous, otherwise we can use the technique in \cite{TL, W} and the
result is the same. Thus, by Ekeland's variational principle, there exists a
$u_{\rho}(\cdot)\in\mathcal{U}[0,T]$ such that%
\begin{equation}%
\begin{array}
[c]{l}%
J_{\rho}(u_{\rho}(\cdot))\leq\rho,\text{ }d(u_{\rho}(\cdot),\bar{u}%
(\cdot))\leq\sqrt{\rho},\\
J_{\rho}(u(\cdot))-J_{\rho}(u_{\rho}(\cdot))+\sqrt{\rho}d(u_{\rho}%
(\cdot),u(\cdot))\geq0,\text{ }\forall u(\cdot)\in\mathcal{U}[0,T].
\end{array}
\label{cons1}%
\end{equation}
For any $\varepsilon>0$, let $E_{\varepsilon}\subset\lbrack0,T]$ with
$|E_{\varepsilon}|=\varepsilon$, define%
\[
u_{\rho}^{\varepsilon}(t)=u_{\rho}(t)I_{E_{\varepsilon}^{c}}%
(t)+uI_{E_{\varepsilon}}(t),\text{ }\forall u\in U.
\]
It is easy to check that $d(u_{\rho}(\cdot),u_{\rho}^{\varepsilon}(\cdot
))\leq\varepsilon$. Let $(x_{\rho}(\cdot),y_{\rho}(\cdot),z_{\rho}(\cdot))$ be
the solution corresponding to $u_{\rho}(\cdot)$. Similarly for $(x_{\rho
}^{\varepsilon}(\cdot),y_{\rho}^{\varepsilon}(\cdot),z_{\rho}^{\varepsilon
}(\cdot),u_{\rho}^{\varepsilon}(\cdot))$. Thus by (\ref{cons1}), we can get%
\begin{equation}%
\begin{array}
[c]{cl}%
0 & \leq J_{\rho}(u_{\rho}^{\varepsilon}(\cdot))-J_{\rho}(u_{\rho}%
(\cdot))+\sqrt{\rho}\varepsilon\\
& \leq\lambda_{\rho}[y_{\rho}^{\varepsilon}(0)-y_{\rho}(0)]+\mu_{\rho
}\{E[\varphi(x_{\rho}^{\varepsilon}(T),y_{\rho}^{\varepsilon}(0))]-E[\varphi
(x_{\rho}(T),y_{\rho}(0))]\}+\sqrt{\rho}\varepsilon+o(\varepsilon),
\end{array}
\label{cons2}%
\end{equation}
where%
\[
\lambda_{\rho}=J_{\rho}(u_{\rho}(\cdot))^{-1}[(y_{\rho}(0)-\bar{y}%
(0))+\rho],\text{ }\mu_{\rho}=J_{\rho}(u_{\rho}(\cdot))^{-1}E[\varphi(x_{\rho
}(T),y_{\rho}(0))].
\]
The same analysis as in Theorem \ref{th3}, let $(p^{\rho}(\cdot),q^{\rho
}(\cdot))$ and $(P^{\rho}(\cdot),Q^{\rho}(\cdot))$ be respectively the
solutions of equations (\ref{adjoint1}) and (\ref{adjoint2}) with $(\bar
{x}(\cdot),\bar{y}(\cdot),\bar{z}(\cdot),\bar{u}(\cdot))$ replaced by
$(x_{\rho}(\cdot),y_{\rho}(\cdot),z_{\rho}(\cdot),u_{\rho}(\cdot))$, and let
all the coefficients be added by a superscript $\rho$. Then%
\begin{equation}
y_{\rho}^{\varepsilon}(0)-y_{\rho}(0)=\hat{y}_{\rho}(0)+o(\varepsilon
),\label{cons21}%
\end{equation}
where%
\begin{equation}%
\begin{array}
[c]{cl}%
\hat{y}_{\rho}(t)= & \int_{t}^{T}\{f_{y}^{\rho}(s)\hat{y}_{\rho}%
(s)+f_{z}^{\rho}(s)\hat{z}_{\rho}(s)+[\langle p^{\rho}(s),\delta b^{\rho
}(s)\rangle+\langle q^{\rho}(s),\delta\sigma^{\rho}(s)\rangle+\frac{1}%
{2}\langle P^{\rho}(s)\delta\sigma^{\rho}(s),\delta\sigma^{\rho}(s)\rangle\\
& +f(s,x_{\rho}(s),y_{\rho}(s),z_{\rho}(s)+\langle p^{\rho}(s),\delta
\sigma^{\rho}(s)\rangle,u)-f(s,x_{\rho}(s),y_{\rho}(s),z_{\rho}(s),u_{\rho
}(s))]I_{E_{\varepsilon}}(s)\}ds\\
& -\int_{t}^{T}\hat{z}_{\rho}(s)dW(s).
\end{array}
\label{cons3}%
\end{equation}
Similarly, let
\[
\left\{
\begin{array}
[c]{rl}%
-dp_{0}^{\rho}(t)= & [(b_{x}^{\rho}(t))^{T}p_{0}^{\rho}(t)+(\sigma_{x}^{\rho
}(t))^{T}q_{0}^{\rho}(t)]dt-q_{0}^{\rho}(t)dW(t),\\
p_{0}(T)= & \mu_{\rho}\varphi_{x}(x_{\rho}(T),y_{\rho}(0)),
\end{array}
\right.
\]%
\[
\left\{
\begin{array}
[c]{rl}%
-dP_{0}^{\rho}(t)= & [(b_{x}^{\rho}(t))^{T}P_{0}^{\rho}(t)+P_{0}^{\rho
}(t)b_{x}^{\rho}(t)+(\sigma_{x}^{\rho}(t))^{T}P_{0}^{\rho}(t)\sigma_{x}^{\rho
}(t)+(\sigma_{x}^{\rho}(t))^{T}Q_{0}^{\rho}(t)+Q_{0}^{\rho}(t)\sigma_{x}%
^{\rho}(t)\\
& +(b_{xx}^{\rho}(t))^{T}p_{0}^{\rho}(t)+(\sigma_{xx}^{\rho}(t))^{T}%
q_{0}^{\rho}(t)]dt-Q_{0}^{\rho}(t)dW(t),\\
P_{0}(T)= & \mu_{\rho}\varphi_{xx}(x_{\rho}(T),y_{\rho}(0)).
\end{array}
\right.
\]
Then%
\begin{equation}%
\begin{array}
[c]{l}%
\mu_{\rho}\{E[\varphi(x_{\rho}^{\varepsilon}(T),y_{\rho}^{\varepsilon
}(0))]-E[\varphi(x_{\rho}(T),y_{\rho}(0))]\}\\
=E[\int_{0}^{T}\{\langle p_{0}^{\rho}(s),\delta b^{\rho}(s)\rangle+\langle
q_{0}^{\rho}(s),\delta\sigma^{\rho}(s)\rangle+\frac{1}{2}\langle P_{0}^{\rho
}(s)\delta\sigma^{\rho}(s),\delta\sigma^{\rho}(s)\rangle\}I_{E_{\varepsilon}%
}(s)ds]\\
\text{ \ }+\mu_{\rho}E[\varphi_{y}(x_{\rho}(T),y_{\rho}(0))]\hat{y}_{\rho
}(0)+o(\varepsilon).
\end{array}
\label{cons4}%
\end{equation}
Define the following adjoint equation for BSDE (\ref{cons3}):%
\[
\left\{
\begin{array}
[c]{l}%
d\gamma^{\rho}(t)=f_{y}^{\rho}(t)\gamma^{\rho}(t)dt+f_{z}^{\rho}%
(t)\gamma^{\rho}(t)dW(t),\\
\gamma^{\rho}(0)=\lambda_{\rho}+\mu_{\rho}E[\varphi_{y}(x_{\rho}(T),y_{\rho
}(0))].
\end{array}
\right.
\]
Then we can get%
\begin{equation}%
\begin{array}
[c]{l}%
\{\lambda_{\rho}+\mu_{\rho}E[\varphi_{y}(x_{\rho}(T),y_{\rho}(0))]\}\hat
{y}_{\rho}(0)\\
=E[\int_{0}^{T}\gamma^{\rho}(s)\{\langle p^{\rho}(s),\delta b^{\rho}%
(s)\rangle+\langle q^{\rho}(s),\delta\sigma^{\rho}(s)\rangle+\frac{1}%
{2}\langle P^{\rho}(s)\delta\sigma^{\rho}(s),\delta\sigma^{\rho}(s)\rangle\\
\text{ \ }+f(s,x_{\rho}(s),y_{\rho}(s),z_{\rho}(s)+\langle p^{\rho}%
(s),\delta\sigma^{\rho}(s)\rangle,u)-f(s,x_{\rho}(s),y_{\rho}(s),z_{\rho
}(s),u_{\rho}(s))\}I_{E_{\varepsilon}}(s)ds].
\end{array}
\label{cons5}%
\end{equation}
Define the following function:
\begin{equation}%
\begin{array}
[c]{l}%
\mathcal{H}(t,x,y,z,u,x^{\prime},u^{\prime},p_{0},q_{0},P_{0},p,q,P,\gamma)\\
=\langle p_{0}+\gamma p,b(t,x,u)\rangle+\langle q_{0}+\gamma q,\sigma
(t,x,u)\rangle\\
\text{ \ }+\frac{1}{2}\langle(P_{0}+\gamma P)(\sigma(t,x,u)-\sigma
(t,x^{\prime},u^{\prime})),\sigma(t,x,u)-\sigma(t,x^{\prime},u^{\prime
})\rangle\\
\text{ \ }+\gamma(t)f(t,x,y,z+\langle p,\sigma(t,x,u)-\sigma(t,x^{\prime
},u^{\prime})\rangle,u).
\end{array}
\label{cons8}%
\end{equation}
It follows from (\ref{cons2}), (\ref{cons21}), (\ref{cons4}) and (\ref{cons5})
that%
\begin{align*}
0 &  \leq E[\int_{0}^{T}\{\mathcal{H}(t,x_{\rho}(t),y_{\rho}(t),z_{\rho
}(t),u,x_{\rho}(t),u_{\rho}(t),p_{0}^{\rho}(t),q_{0}^{\rho}(t),P_{0}^{\rho
}(t),p^{\rho}(t),q^{\rho}(t),P^{\rho}(t),\gamma^{\rho}(t))\\
&  \text{ \ \ }-\mathcal{H}(t,x_{\rho}(t),y_{\rho}(t),z_{\rho}(t),u_{\rho
}(t),x_{\rho}(t),u_{\rho}(t),p_{0}^{\rho}(t),q_{0}^{\rho}(t),P_{0}^{\rho
}(t),p^{\rho}(t),q^{\rho}(t),P^{\rho}(t),\gamma^{\rho}(t))\}I_{E_{\varepsilon
}}(t)dt]\\
&  \text{ \ \ }+\sqrt{\rho}\varepsilon+o(\varepsilon).
\end{align*}
Thus we obtain%
\[%
\begin{array}
[c]{l}%
\mathcal{H}(t,x_{\rho}(t),y_{\rho}(t),z_{\rho}(t),u,x_{\rho}(t),u_{\rho
}(t),p_{0}^{\rho}(t),q_{0}^{\rho}(t),P_{0}^{\rho}(t),p^{\rho}(t),q^{\rho
}(t),P^{\rho}(t),\gamma^{\rho}(t))\\
\geq\mathcal{H}(t,x_{\rho}(t),y_{\rho}(t),z_{\rho}(t),u_{\rho}(t),x_{\rho
}(t),u_{\rho}(t),p_{0}^{\rho}(t),q_{0}^{\rho}(t),P_{0}^{\rho}(t),p^{\rho
}(t),q^{\rho}(t),P^{\rho}(t),\gamma^{\rho}(t))\\
\text{ \ }-\sqrt{\rho},\text{ }\forall u\in U,\text{ a.e., a.s..}%
\end{array}
\]
Obviously, $|\lambda_{\rho}|^{2}+|\mu_{\rho}|^{2}=1$. Thus there exists a
subsequence of $(\lambda_{\rho},\mu_{\rho})$ which converges to $(\lambda
,\mu)$ with $|\lambda|^{2}+|\mu|^{2}=1$ as $\rho\rightarrow0$. Note that
$d(u_{\rho}(\cdot),\bar{u}(\cdot))\leq\sqrt{\rho}$, then we can get for
further subsequence%
\[%
\begin{array}
[c]{l}%
(x_{\rho}(\cdot),y_{\rho}(\cdot),z_{\rho}(\cdot),u_{\rho}(\cdot),p_{0}^{\rho
}(\cdot),q_{0}^{\rho}(\cdot),P_{0}^{\rho}(\cdot),p^{\rho}(\cdot),q^{\rho
}(\cdot),P^{\rho}(\cdot),\gamma^{\rho}(\cdot))\rightarrow\\
(\bar{x}(\cdot),\bar{y}(\cdot),\bar{z}(\cdot),\bar{u}(\cdot),p_{0}%
(\cdot),q_{0}(\cdot),P_{0}(\cdot),p(\cdot),q(\cdot),P(\cdot),\gamma
(\cdot)),\text{ a.e., a.s.,}%
\end{array}
\]
where $(p(\cdot),q(\cdot))$ and $(P(\cdot),Q(\cdot))$ are respectively the
solutions of equations (\ref{adjoint1}) and (\ref{adjoint2}),%
\begin{equation}
\left\{
\begin{array}
[c]{rl}%
-dp_{0}(t)= & [(b_{x}(t))^{T}p_{0}(t)+(\sigma_{x}(t))^{T}q_{0}(t)]dt-q_{0}%
(t)dW(t),\\
p_{0}(T)= & \mu\varphi_{x}(\bar{x}(T),\bar{y}(0)),
\end{array}
\right.  \label{cons9}%
\end{equation}%
\begin{equation}
\left\{
\begin{array}
[c]{rl}%
-dP_{0}(t)= & [(b_{x}(t))^{T}P_{0}(t)+P_{0}(t)b_{x}(t)+(\sigma_{x}%
(t))^{T}P_{0}(t)\sigma_{x}(t)+(\sigma_{x}(t))^{T}Q_{0}(t)+Q_{0}(t)\sigma
_{x}(t)\\
& +(b_{xx}(t))^{T}p_{0}(t)+(\sigma_{xx}(t))^{T}q_{0}(t)]dt-Q_{0}(t)dW(t),\\
P_{0}(T)= & \mu\varphi_{xx}(\bar{x}(T),\bar{y}(0)),
\end{array}
\right.  \label{cons10}%
\end{equation}%
\begin{equation}
\left\{
\begin{array}
[c]{l}%
d\gamma(t)=f_{y}(t)\gamma(t)dt+f_{z}(t)\gamma(t)dW(t),\\
\gamma(0)=\lambda+\mu E[\varphi_{y}(\bar{x}(T),\bar{y}(0))].
\end{array}
\right.  \label{cons11}%
\end{equation}
Thus we get the following theorem.

\begin{theorem}
Suppose (A1), (A2) and (A3) hold. Let $\bar{u}(\cdot)$ be an optimal control
with state constraint (\ref{statecons}) and $(\bar{x}(\cdot),\bar{y}%
(\cdot),\bar{z}(\cdot))$ be the corresponding solution. Then there exist two
contants $\lambda$, $\mu$ with $|\lambda|^{2}+|\mu|^{2}=1$ such that
\[%
\begin{array}
[c]{l}%
\mathcal{H}(t,\bar{x}(t),\bar{y}(t),\bar{z}(t),u,\bar{x}(t),\bar{u}%
(t),p_{0}(t),q_{0}(t),P_{0}(t),p(t),q(t),P(t),\gamma(t))\\
\geq\mathcal{H}(t,\bar{x}(t),\bar{y}(t),\bar{z}(t),\bar{u},\bar{x}(t),\bar
{u}(t),p_{0}(t),q_{0}(t),P_{0}(t),p(t),q(t),P(t),\gamma(t)),\\
\text{
\ \ \ \ \ \ \ \ \ \ \ \ \ \ \ \ \ \ \ \ \ \ \ \ \ \ \ \ \ \ \ \ \ \ \ \ \ \ \ \ }%
\forall u\in U,\text{ a.e., a.s.,}%
\end{array}
\]
where $\mathcal{H}(\cdot)$, $(p(\cdot),q(\cdot))$, $(P(\cdot),Q(\cdot))$,
$(p_{0}(\cdot),q_{0}(\cdot))$, $(P_{0}(\cdot),Q_{0}(\cdot))$ and $\gamma
(\cdot)$ are defined in (\ref{cons8}), (\ref{adjoint1}), (\ref{adjoint2}),
(\ref{cons9}), (\ref{cons10}) and (\ref{cons11}).
\end{theorem}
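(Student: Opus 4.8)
The plan is to treat the state constraint $E[\varphi(x(T),y(0))]=0$ by Ekeland's variational principle, thereby converting the constrained minimization into a family of \emph{unconstrained} penalized problems to which the spike-variation machinery of Theorem \ref{th3} applies verbatim. The point is that a spike variation of $\bar{u}$ generically violates the constraint, so optimality of $\bar{u}$ over $\mathcal{U}_{ad}[0,T]$ does not directly yield an inequality for $u^{\varepsilon}$; Ekeland's principle repairs this by producing a genuine approximate minimizer of a penalized functional on all of $\mathcal{U}[0,T]$.

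First I would introduce the penalized cost $J_{\rho}$ in (\ref{costcons}), observe that it is strictly positive and satisfies $J_{\rho}(\bar{u}(\cdot))=\rho\leq\inf J_{\rho}+\rho$, and equip $\mathcal{U}[0,T]$ with the Ekeland metric $d$. Ekeland's variational principle then yields a near-minimizer $u_{\rho}(\cdot)$ satisfying (\ref{cons1}); crucially, $u_{\rho}$ is an exact minimizer of the unconstrained functional $J_{\rho}(\cdot)+\sqrt{\rho}\,d(u_{\rho}(\cdot),\cdot)$, so spike variation around $u_{\rho}$ is now legitimate.

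Next I would perform a spike variation $u_{\rho}^{\varepsilon}$ around $u_{\rho}$ and apply the variational expansion of Theorem \ref{th3}, now based at $u_{\rho}$ rather than $\bar{u}$, to obtain (\ref{cons21})--(\ref{cons3}). The cost deviation expands through the multiplier $\lambda_{\rho}$ and the constraint deviation through $\mu_{\rho}$; introducing the constraint adjoint pair $(p_{0}^{\rho},q_{0}^{\rho})$ and the second-order adjoint $(P_{0}^{\rho},Q_{0}^{\rho})$ and applying It\^{o}'s formula produces (\ref{cons4}), while the adjoint process $\gamma^{\rho}$ for the variational BSDE (\ref{cons3}) gives (\ref{cons5}). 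Assembling (\ref{cons2}), (\ref{cons21}), (\ref{cons4}) and (\ref{cons5}) with the Hamiltonian (\ref{cons8})---in which the novel term $\langle p,\sigma(t,x,u)-\sigma(t,x',u')\rangle$ enters the argument of $f$---yields an approximate maximum principle with an $O(\sqrt{\rho})$ defect.

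Finally I would pass to the limit $\rho\rightarrow0$. Since $|\lambda_{\rho}|^{2}+|\mu_{\rho}|^{2}=1$, a subsequence of $(\lambda_{\rho},\mu_{\rho})$ converges to some $(\lambda,\mu)$ on the unit sphere, and $d(u_{\rho}(\cdot),\bar{u}(\cdot))\leq\sqrt{\rho}\rightarrow0$ drives the states and all adjoint processes to those based at $\bar{u}$, recovering (\ref{cons9})--(\ref{cons11}). The hard part is controlling this limit rigorously: one must show, along a further subsequence and a.e., a.s., that the full collection $(p^{\rho},q^{\rho},P^{\rho},Q^{\rho},p_{0}^{\rho},q_{0}^{\rho},P_{0}^{\rho},Q_{0}^{\rho},\gamma^{\rho})$ converges to the corresponding processes based at $\bar{u}$, using stability estimates for the relevant SDEs and BSDEs together with the convergence $u_{\rho}\rightarrow\bar{u}$ in the Ekeland metric, and one must ensure that the expansion of Theorem \ref{th3} holds with constants uniform in $\rho$ so the $o(\varepsilon)$ remainders do not degrade. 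Once these convergences are established, taking $\rho\rightarrow0$ in the approximate inequality eliminates the $\sqrt{\rho}$ defect and delivers the stated maximum principle.
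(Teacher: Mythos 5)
Your proposal follows essentially the same route as the paper: penalization via $J_{\rho}$, Ekeland's variational principle to obtain $u_{\rho}$, spike variation and the Theorem \ref{th3} expansion based at $u_{\rho}$, assembly of (\ref{cons2}), (\ref{cons21}), (\ref{cons4}), (\ref{cons5}) into an approximate maximum principle with $\sqrt{\rho}$ defect, and passage to the limit $\rho\rightarrow0$ using $|\lambda_{\rho}|^{2}+|\mu_{\rho}|^{2}=1$ and $d(u_{\rho}(\cdot),\bar{u}(\cdot))\leq\sqrt{\rho}$. Your remark that the stability of the adjoint processes and the uniformity in $\rho$ of the $o(\varepsilon)$ remainders need rigorous justification is a fair observation about a step the paper itself treats only briefly, but it is not a deviation in approach.
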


\bigskip

\textbf{Acknowledgments}

I would like to thank Professor S. Peng for many helpful discussions and
valuable comments. I also would like to thank Professor S. Ji for many helpful discussions.


\begin{thebibliography}{99}                                                                                               %


\bibitem {CE}Z. Chen and L. Epstein, Ambiguity, risk, and asset returns in
continuous time, Econometrica, 70 (2002), pp. 1403-1443.

\bibitem {DZ}M. Dokuchaev and {\normalsize X. Y. Zhou, Stochastic controls
with terminal contingent conditions, J. Math. Anal. Appl., 238 (1999), pp.
143-165.}

\bibitem {DE}D. Duffie and L. Epstein, \emph{Stochastic differential utility},
Econometrica, 60 (1992), pp. 353--394.

\bibitem {EPQ}N. El Karoui, S. Peng and M. C. Quenez, \emph{Backward
stochastic differential equations in finance}, Math. Finance, 7 (1997), pp. 1-71.

\bibitem {EPQ1}N. El Karoui, S. Peng and M. C. Quenez, A dynamic maximum
priciple for the optimization of recursive utilities under constraints, Ann.
Appl. Probab., 11 (2001), pp. 664-693.

\bibitem {HP}Y. Hu and S. Peng, Solution of forward-backward stochastic
differential equations, Probab. Theory Related Fields, 103 (1995), pp. 273-283.

\bibitem {JZ}S. Ji and {\normalsize X. Y. Zhou, A maximum principle for
stochastic optimal control with terminal state constrains, and its
applications, Comm. Inf. Syst., 6 (2006), pp. 321-337.}

\bibitem {MY}J. Ma and J. Yong, Forward-Backward Stochastic Differential
Equations and Their Applications, Springer-Verlag, Berlin, 1999.

\bibitem {KZ}M. Kohlmann and {\normalsize X. Y.} Zhou, Relationship between
backward stochastic differential equations and stochastic controls: a
linear-quadratic approach, SIAM J. Control Optim., 38 (2000), pp. 1392-1407.

\bibitem {LZ}A. Lim and {\normalsize X. Y.} Zhou, Linear-quadratic control of
backward stochastic differential equations, SIAM J. Control Optim., 40 (2001),
pp. 450-474.

\bibitem {PP90}E. Pardoux and S. Peng, \emph{Adapted Solutions of Backward
Stochastic Equations,} Systerm and Control Letters, 14 (1990), pp. 55-61.

\bibitem {P90}S. Peng, A general stochastic maximum principle for optimal
control problems, SIAM J. Control Optim., 28 (1990), pp. 966-979.

\bibitem {P93}S. Peng, Backward stochastic differential equations and
applications to optimal control, Appl. Math. Optim., 27 (1993), pp. 125-144.

\bibitem {P97}S. Peng, Backward SDE and related $g$-expectation, in Backward
Stochastic Differential Equations (Paris, 1995-1996), Pitman Res. Notes Math.
Ser. 364, Longman, Harlow, 1997, pp. 141-159.

\bibitem {P98}S. Peng, Open problems on backward stochastic differential
equations, In S. Chen, X. Li, J. Yong and {\normalsize X. Y.} Zhou (Eds),
Control of distributed parameter and stocastic systems, (1998), pp. 265-273.

\bibitem {PW}S. Peng and Z. Wu, Fully coupled forward-backward stochastic
differential equations and applications to optimal control, SIAM J. Control
Optim., 37 (1999), pp. 825-843.

\bibitem {SW}J. Shi and Z. Wu, The maximum principle for fully coupled
forward-backward stochastic control system, Acta Automat. Sinica, 32 (2006),
pp. 161-169.

\bibitem {TL}S. Tang and X. Li, Necessary conditions for optimal control of
stochastic systems with random jumps, SIAM J. Control Optim., 32 (1994), pp. 1447-1475.

\bibitem {W1}Z. Wu, Maximum principle for optimal control problem of fully
coupled forward-backward stochastic systems, Systems Sci. Math. Sci., 11
(1998), pp. 249-259.

\bibitem {W}Z. Wu, A general maximum principle for optimal control of
forward-backward stochastic systems, Automatica, 49 (2013), 1473-1480.

\bibitem {X}W. Xu, Stochastic maximum principle for optimal control problem of
forward and backward system, J. Austral. Math. Soc. Ser. B, 37 (1995), pp. 172-185.

\bibitem {Y0}J. Yong, Stochastic optimal control and forward-backward
stochastic differential equations, Comput. Appl. Math., 21 (2002), pp. 369-403.

\bibitem {Y}J. Yong, Forward-backward stochastic differential equations with
mixed initial and terminal conditions, Trans. Amer. Math. Soc., 362 (2010),
pp. 1047-1096.

\bibitem {Y1}J. Yong, Optimality variational principle for controlled
forward-backward stochastic differential equations with mixed initial-terminal
conditions, SIAM J. Control Optim., 48 (2010), pp. 4119-4156.

\bibitem {YZ}{\normalsize J. Yong and X. Y. Zhou, \emph{Stochastic controls:
Hamiltonian systems and HJB equations}, Springer-Verlag, New York, 1999. }
\end{thebibliography}
\end{document}